\title{Weakly Semi-Preopen and Semi-Preclosed Functions in $L$-fuzzy Topological Spaces}
\author{  A. Ghareeb\smallskip\\
{\it Department of Mathematics, Faculty of Science, South
Valley University, Qena,  Egypt.}\\ {E-mail: nasserfuzt@aim.com.}}
\theoremstyle{definition}
\newtheorem{defn}{Definition}[section]
\newtheorem{thm}{Theorem}[section]
\newtheorem{cexa}{Counter Example}[section]
\theoremstyle{remark}
\newtheorem*{rmk}{Remark}
\newcommand{\To}{\longrightarrow}
\begin{document}
\date{}
\maketitle
\begin{abstract}
A new class of functions called $L$-fuzzy weakly Semi-Preopen (Semi-Preclosed) functions in $L$-fuzzy topological spaces are introduced in this paper. Some characterizations of this class and its properties and the relationship with other classes of functions between $L$-fuzzy topological spaces are also obtained.\medskip

\noindent\textbf{keywords:} $L$-fuzzy topology; $L$-fuzzy weakly Semi-Preopen function; $L$-fuzzy weakly Semi-Preclosed function.\medskip

\noindent\textbf{AMS Subject Classification:} 54A40.
\end{abstract}

\section{Introduction}

The fuzzy concept has invaded almost all branches of Mathematics since the introduction
of the concept by Zadeh \cite{zadeh}. Fuzzy sets have applications in many fields
such as information \cite{Smets} and control \cite{Sugeno}. The theory of fuzzy topological spaces was introduced and developed by Chang \cite{Chang} and since then various notions in classical topology have been extended to fuzzy topological spaces. \v{S}ostak \cite{sostak} and Kubiak \cite{Kubiak} introduced the fuzzy topology as an extension of Chang's fuzzy topology. It has been developed in many directions. \v{S}ostak \cite{sostak1} also published a survey article of the developed areas of fuzzy topological spaces. The topologistes used to call Chang's fuzzy topology by "$L$-topology" and Kubiak-\v{S}ostak's fuzzy topology by "$L$-fuzzy topology" where $L$ is an appropriate lattice.\medskip

The concept of $r$-fuzzy Preopen and $r$-fuzzy Preclosed subsets was introduced in \cite{kim1}. In \cite{kim1} this concept has been used to introduce new kinds of functions in $L$-fuzzy topological spaces (in case $L=[0,1]$). Our motivation in this paper is to define new functions in $L$-fuzzy topological spaces based on the concepts $r$-fuzzy Semi-Preopen and $r$-fuzzy Semi-Preclosed subsets which has been defined in \cite{kim1}  and investigate their properties.

\section{Preliminaries}

Throughout this paper $(L,\le ,\bigwedge ,\bigvee ,')$  is a
complete DeMorgan algebra, $X$ is a nonempty set. $L^{X} $ is the
set of all $L$-fuzzy subsets  on $X$. The smallest element and the largest element in $L^{X} $ are denoted by $\underline \bot$ and $\underline \top$, respectively.  A complete lattice $L$ is a complete Heyting algebra if it satisfies the following infinite distributive law: For all $a\in L$ and all $B\subset L$, \[a\wedge\bigvee B=\bigvee\{a\wedge b\,|\,b\in B\}.\]

An element $a$ in  $L$ is called a prime element if $a\ge b\wedge c$
implies $a\ge b$ or $a\ge c$. An element $a$ in $L$ is called
co-prime if $a'$ is prime \cite{6}. The set of non-unit prime
elements in  $L$ is denoted by $P(L)$. The set of non-zero co-prime
elements in $L$ is denoted by $J(L)$.\smallskip

The binary relation $\prec $ in  $L$ is defined as follows: for $a$,
$b\in L$,  $a\prec b$ if and only if for every subset  $D\subseteq
L$, the relation $b\le sup \,D$ always implies the existence of
$d\in D$ with  $a\le d$ \cite{7}. In a completely distributive DeMorgan algebra $L$, each element  $b$ is a sup of $\{ a\in L|a\prec
b\} $. A set $\{ a\in L|a\prec b\}$ is called the greatest minimal
family of  $b$ in the sense of \cite{9,10}, denoted by $\beta (b)$,
and $\beta ^{*} (b)=\beta (b)\cap J(L)$.  Moreover, for $b\in L$, we
define $\alpha (b)=\{ a\in L|a'\prec b'\}$ and $\alpha ^{*}
(b)=\alpha (b)\cap P(L)$.\smallskip

An L-fuzzy point in $L^X$  is an L-fuzzy subset $x_{\lambda}$, where $\lambda\in L_{\bot}=L-\{\bot\}$, such that $x_{\lambda}(y)=\lambda$ when $y=x$ and $\bot$ otherwise. For L-fuzzy subsets $U$, $V\in L^X$, we write $U q V$ to mean that $U$
is quasi-coincident (q-coincident, for short) with $U$, i.e., there
exists at least one point $x\in X$ such  that $U(x)\not\leq U(x)'$. Negation of such a statement is denoted as $U \neg q V$. Let $f:X\rightarrow Y$ be a crisp mapping.
Then an L-fuzzy mapping $f_L^{\rightarrow}:L^{X}\rightarrow L^{Y}$ is
induced by $f$ as usual, i.e., $f_L^{\rightarrow}(U)(y)=\bigvee_{x\in X,\quad f(x)=y}U(x)$  and  $f_L^{\leftarrow}(V)(x)=V(f(x))$.\medskip

An $L$-topological space (or $L$-space, for short) is a pair $(X, \tau)$, where  $\tau$ is a subfamily of $L^{X} $ which contains $\underline \bot$;  $\underline \top$ and is closed for any suprema and finite infima. $\tau$ is called an $L$-topology on  $X$. Members of $\tau$ are called open $L$-fuzzy subsets and their complements are called closed $L$-fuzzy subsets.\medskip

\begin{defn}[\cite{Kubiak,sostak}]A function $\mathcal{T}\colon L^{X}\To L$ is called an  L-fuzzy
topology on $X$ if it satisfies the following conditions:
\begin{enumerate}
\item[(O1)]$\mathcal{T}(\underline{\bot})=\mathcal{T}(\underline{\top})=\top$.
\item[(O2)]$\mathcal{T}(U \wedge V)\geq \mathcal{T}(U)\wedge\mathcal{T}(V)$ for each $U$, $V\in L^{X}$.
\item[(O3)]$\mathcal{T}(\bigvee_{i\in\Gamma}U_{i})\geq \bigwedge_{i\in \Gamma}\mathcal{T}(U_{i})$ for any $\{U_{i}\}_{i\in\Gamma}\subset L^{X}$.
\end{enumerate}
\end{defn}
The pair $(X,\mathcal{T})$ is called an L-fuzzy topological spaces. $\mathcal{T}(U)$ can be interpreted as the degree to which $U$ is an open $L$-fuzzy subset and $\mathcal{T}(U')$ will be called the degree of closedness of $U$, where $U'$ is the $L$-complement of $U$.  A function $f:(X,\mathcal{T}_1)\rightarrow (Y,\mathcal{T}_2)$ is said to be  continuous with respect to $L$-fuzzy topologies $\mathcal{T}_1$ and $\mathcal{T}_2$ if $\mathcal{T}_1(f_L^\leftarrow(V))\geq \mathcal{T}_2(V)$ holds for all $V\in L^Y$.\medskip

For  $a\in L$ and  the function $\mathcal{T}:L^{X}\rightarrow L$, we use the following notation from \cite{4}. \[\mathcal{T}_{[a]} =\{ A\in L^X|\mathcal{T}(A)\ge a\}.\]

\begin{thm}[\cite{Zhang}] Let $\mathcal{T}:L^X\rightarrow L$ be a function. Then the following conditions are equivalent:\begin{enumerate}\item[(1)] $\mathcal{T}$ is an $L$-fuzy topology on $X$;
\item[(2)] $\mathcal{T}_{[a]}$ is an $L$-topology on $X$, for each $a\in J(L)$.
\end{enumerate}
\end{thm}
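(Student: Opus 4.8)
The plan is to prove the equivalence in both directions, using the characterization of the relation $\prec$ and the fact that each $b \in L$ is the supremum of $\beta(b)$ (which holds in a completely distributive DeMorgan algebra). The key technical tool is the interplay between the threshold condition $\mathcal{T}(A) \ge a$ defining $\mathcal{T}_{[a]}$ and the co-primeness of elements $a \in J(L)$.

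For the direction $(1) \Rightarrow (2)$, I would assume $\mathcal{T}$ satisfies (O1)--(O3) and fix $a \in J(L)$, then verify that $\mathcal{T}_{[a]}$ is an $L$-topology by checking the three closure conditions. The bottom and top elements lie in $\mathcal{T}_{[a]}$ immediately from (O1), since $\mathcal{T}(\underline\bot) = \mathcal{T}(\underline\top) = \top \ge a$. For finite infima, if $U, V \in \mathcal{T}_{[a]}$ then (O2) gives $\mathcal{T}(U \wedge V) \ge \mathcal{T}(U) \wedge \mathcal{T}(V) \ge a \wedge a = a$, so $U \wedge V \in \mathcal{T}_{[a]}$. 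For arbitrary suprema, if $\{U_i\}_{i \in \Gamma} \subseteq \mathcal{T}_{[a]}$ then each $\mathcal{T}(U_i) \ge a$, so (O3) yields $\mathcal{T}(\bigvee_i U_i) \ge \bigwedge_i \mathcal{T}(U_i) \ge a$. This direction is routine and does not even require $a$ to be co-prime.

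For the harder direction $(2) \Rightarrow (1)$, I would assume $\mathcal{T}_{[a]}$ is an $L$-topology for every $a \in J(L)$ and recover the three axioms. The crucial observation is that $\mathcal{T}(U) = \bigvee \beta(\mathcal{T}(U)) = \bigvee\{a \in J(L) \mid a \prec \mathcal{T}(U)\}$ by complete distributivity, so inequalities of the form $\mathcal{T}(W) \ge c$ can be established by showing $a \le \mathcal{T}(W)$ for every co-prime $a \prec c$, equivalently every $a \in \beta^*(c)$. To verify (O3), fix $\{U_i\}$ and set $c = \bigwedge_i \mathcal{T}(U_i)$; for any $a \in J(L)$ with $a \prec c$ I would argue that $a \le \mathcal{T}(U_i)$ for all $i$, hence each $U_i \in \mathcal{T}_{[a]}$, so $\bigvee_i U_i \in \mathcal{T}_{[a]}$ by closure under suprema, giving $\mathcal{T}(\bigvee_i U_i) \ge a$; taking the supremum over such $a$ yields $\mathcal{T}(\bigvee_i U_i) \ge c$.

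The main obstacle will be handling (O2), where co-primeness of $a$ is genuinely needed. Given $U, V$ and setting $c = \mathcal{T}(U) \wedge \mathcal{T}(V)$, for a co-prime $a \prec c$ I want to conclude $a \le \mathcal{T}(U)$ and $a \le \mathcal{T}(V)$ separately so that both $U, V \in \mathcal{T}_{[a]}$ and hence $U \wedge V \in \mathcal{T}_{[a]}$ by closure under finite infima. The delicate point is passing from $a \prec \mathcal{T}(U) \wedge \mathcal{T}(V)$ to $a \le \mathcal{T}(U)$ and $a \le \mathcal{T}(V)$; this is exactly where co-primeness of $a$ enters, since for co-prime $a$ the inequality $a \le b \wedge c$ is handled through the complementary prime characterization $a' \ge b' \vee c'$, and one uses that $\prec$ interacts well with the co-prime structure to split the meet. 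I would isolate this splitting step as the technical heart of the proof, and the remaining passage $\mathcal{T}(U \wedge V) \ge c$ then follows by taking the supremum over all co-prime $a \prec c$ as before.
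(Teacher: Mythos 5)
The paper itself offers no proof of this statement: it is imported from the reference \cite{Zhang} as a preliminary, so there is no internal argument to compare against; your proposal has to stand on its own. Its overall architecture is sound: the direction $(1)\Rightarrow(2)$ by reading (O1)--(O3) at level $a$ (correctly observed to need no co-primeness), and the converse by exploiting that in a completely distributive DeMorgan algebra every element $c$ satisfies $c=\bigvee\beta^{*}(c)$, so that $\mathcal{T}(W)\geq c$ follows once $\mathcal{T}(W)\geq a$ for every co-prime $a\prec c$. Your treatment of (O3) in the converse is correct as written.

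However, the step you single out as ``the technical heart'' of the proof is a phantom difficulty, and the mechanism you propose for it is wrong. For \emph{any} element $a$ of \emph{any} lattice, $a\leq b\wedge c$ holds if and only if $a\leq b$ and $a\leq c$: this is the definition of infimum. So passing from $a\prec\mathcal{T}(U)\wedge\mathcal{T}(V)$ (which gives $a\leq\mathcal{T}(U)\wedge\mathcal{T}(V)$, since $a\prec x$ implies $a\leq x$) to $a\leq\mathcal{T}(U)$ and $a\leq\mathcal{T}(V)$ requires no co-primeness, no De Morgan dualization $a'\geq b'\vee c'$, and no ``interaction of $\prec$ with the co-prime structure''; co-primeness is the splitting property for \emph{joins} ($a\leq b\vee c$ implies $a\leq b$ or $a\leq c$), which never arises in this proof. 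The genuine role of restricting to $a\in J(L)$ is global rather than pointwise: since the hypothesis in (2) is only assumed at co-prime levels, the converse needs $J(L)$ to be join-dense, i.e. $c=\bigvee\{a\in J(L)\mid a\leq c\}$ for all $c\in L$, which is exactly what complete distributivity supplies. The same fact, applied to $c=\top$, also yields axiom (O1) in the converse direction ($\mathcal{T}(\underline{\bot})\geq a$ and $\mathcal{T}(\underline{\top})\geq a$ for all $a\in J(L)$, hence both equal $\top$) --- a case your sketch omits entirely. With the phantom ``splitting step'' replaced by the trivial observation and (O1) filled in, your proof is complete; as it stands, it defers its self-declared central step to an argument that does not exist.
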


\begin{thm}[\cite{Chattopadhyay,Kim}] Let $(X,\mathcal{T})$ be an $L$-fuzzy topological space. Then for each $r\in L_{\bot}$, $U\in L^{X}$ we define an operator $C_{\mathcal{T}}\colon L^{X}\times L_{\bot}\rightarrow L^{X}$ as
follows: \[
C_{\mathcal{T}}(U,r)=\bigwedge\{V\in L^{X}\mid
U\leq V,\, \mathcal{T}(V')\geq r\}.
\] For $U$, $V\in L^{X}$ and $r$, $s\in L_{\bot}$,
 the operator $C_{\mathcal{T}}$ satisfies the following statements:
\begin{enumerate}
\item[(C1)]$C_{\mathcal{T}}(\underline{\bot},r)=\underline{\bot}$.
\item[(C2)]$U\leq C_{\mathcal{T}}(U,r)$.
\item[(C3)]$C_{\mathcal{T}}(U,r)\vee C_{\mathcal{T}}(V,r)= C_{\mathcal{T}}(U\vee V,r).$
\item[(C4)]$C_{\mathcal{T}}(U,r)\leq C_{\mathcal{T}}(U,s)$ if $r\leq s$.
\item[(C5)]$C_{\mathcal{T}}(C_{\mathcal{T}}(U,r),r)=C_{\mathcal{T}}(U,r)$.
\end{enumerate}
\end{thm}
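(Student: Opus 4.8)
The plan is to work throughout with the family
\[
\mathcal{F}_r(U)=\{V\in L^X\mid U\le V,\ \mathcal{T}(V')\ge r\},
\]
so that $C_{\mathcal{T}}(U,r)=\bigwedge\mathcal{F}_r(U)$. Two of the five properties fall out immediately from this description. For (C2), every member of $\mathcal{F}_r(U)$ dominates $U$, so $U$ is a lower bound of the family and hence $U\le\bigwedge\mathcal{F}_r(U)$. For (C4), if $r\le s$ then $\mathcal{T}(V')\ge s$ forces $\mathcal{T}(V')\ge r$, whence $\mathcal{F}_s(U)\subseteq\mathcal{F}_r(U)$; since a larger family has a smaller infimum, this gives $C_{\mathcal{T}}(U,r)\le C_{\mathcal{T}}(U,s)$. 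The same ``smaller index set, larger infimum'' observation applied to $\mathcal{F}_r(U\vee V)\subseteq\mathcal{F}_r(U)$ (valid because $U\le U\vee V$) yields monotonicity in the first argument, which already delivers the inequality $C_{\mathcal{T}}(U,r)\vee C_{\mathcal{T}}(V,r)\le C_{\mathcal{T}}(U\vee V,r)$ needed for one half of (C3).

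The single step on which everything else rests is the claim that the closure is itself closed to degree $r$, namely
\[
\mathcal{T}\big(C_{\mathcal{T}}(U,r)'\big)\ge r.
\]
I expect this to be the main obstacle, since it is the only place where the lattice structure and the topology axioms genuinely interact. The argument runs as follows: by the De Morgan law in $L$ one has $C_{\mathcal{T}}(U,r)'=\big(\bigwedge_{V\in\mathcal{F}_r(U)}V\big)'=\bigvee_{V\in\mathcal{F}_r(U)}V'$; applying (O3) gives $\mathcal{T}\big(\bigvee_V V'\big)\ge\bigwedge_V\mathcal{T}(V')$; and since each $V\in\mathcal{F}_r(U)$ satisfies $\mathcal{T}(V')\ge r$ by definition, the infimum on the right is at least $r$.

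With this lemma in hand the remaining properties are routine. For (C1), note that $\underline{\bot}\in\mathcal{F}_r(\underline{\bot})$ because $\mathcal{T}(\underline{\bot}')=\mathcal{T}(\underline{\top})=\top\ge r$ by (O1), so the infimum is $\le\underline{\bot}$, which forces equality as $\underline{\bot}$ is least. For (C5), writing $A=C_{\mathcal{T}}(U,r)$, the lemma shows $A\in\mathcal{F}_r(A)$ (using $A\le A$ together with $\mathcal{T}(A')\ge r$), so $C_{\mathcal{T}}(A,r)\le A$; the reverse inequality is just (C2) applied to $A$. Finally, to close (C3) I would verify the missing inequality $C_{\mathcal{T}}(U\vee V,r)\le C_{\mathcal{T}}(U,r)\vee C_{\mathcal{T}}(V,r)$ by showing the right-hand side lies in $\mathcal{F}_r(U\vee V)$: it dominates $U\vee V$ by (C2), and its complement is $C_{\mathcal{T}}(U,r)'\wedge C_{\mathcal{T}}(V,r)'$, whose $\mathcal{T}$-value is at least $r$ by (O2) combined with the lemma applied to each of $U$ and $V$.
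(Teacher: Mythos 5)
Your proof is correct. Note that the paper itself offers no proof of this statement --- it is quoted as a preliminary result with a citation to Chattopadhyay--Samanta and Kim --- so there is nothing in-paper to compare against; your argument is the standard one that those sources give. You correctly identified the one non-trivial point, namely that $\mathcal{T}\bigl(C_{\mathcal{T}}(U,r)'\bigr)\geq r$ via the infinite De Morgan law (valid since the involution on a complete DeMorgan algebra is order-reversing, hence carries infima to suprema) combined with axiom (O3), and everything else (C1), (C3), (C5) hinges on exactly that lemma together with (O1) and (O2), just as you laid out.
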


\begin{thm}[\cite{kim1}]Let $(X,\mathcal{T})$ be an $L$-fuzzy topological space. Then for each $r\in L_{\bot}$, $U\in L^{X}$ we define an operator $\mathcal{I}_{\mathcal{T}}\colon L^{X}\times L_{\bot}\rightarrow L^{X}$ as follows:
\[\mathcal{I}_{\mathcal{T}}(U,r)=\bigvee\{V\in L^{X}\mid
V\leq U,\, \mathcal{T}(V)\geq r\}.\]
For $U$, $V\in L^{X}$ and $r$, $s\in L_{\bot}$, the operator
$\mathcal{I}_{\mathcal{T}}$  satisfies the following statements:
\begin{enumerate}
\item[(I1)]$\mathcal{I}_{\mathcal{T}}(U',r)=C_{\mathcal{T}}(U,r)'$.
\item[(I2)]$\mathcal{I}_{\mathcal{T}}(\underline{\top},r)=\underline{\top}$.
\item[(I3)]$\mathcal{I}_{\mathcal{T}}(U,r)\leq U$.
\item[(I4)]$\mathcal{I}_{\mathcal{T}}(U,r)\wedge \mathcal{I}_{\mathcal{T}}(V,r)= \mathcal{I}_{\mathcal{T}}(U\wedge V,r).$
\item[(I5)]$\mathcal{I}_{\mathcal{T}}(U,s)\leq \mathcal{I}_{\mathcal{T}}(U,r)$ if $r\leq s$.
\item[(I6)]$\mathcal{I}_{\mathcal{T}}(\mathcal{I}_{\mathcal{T}}(U,r),r)=\mathcal{I}_{\mathcal{T}}(U,r)$.
\item[(I7)]If $\mathcal{I}_{\mathcal{T}}(C_{\mathcal{T}}(U,r),r)=U$, then
$C_{\mathcal{T}}(\mathcal{I}_{\mathcal{T}}(U',r),r)=U'$.
\end{enumerate}
\end{thm}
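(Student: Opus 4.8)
The plan is to establish (I1) first, since it is the De Morgan bridge between $\mathcal{I}_{\mathcal{T}}$ and $C_{\mathcal{T}}$ that lets me transfer the previously listed closure properties to the interior operator. Starting from the definition of $\mathcal{I}_{\mathcal{T}}(U',r)$, I would reindex the defining family by the substitution $W=V'$: the two constraints $V\leq U'$ and $\mathcal{T}(V)\geq r$ become $U\leq W$ and $\mathcal{T}(W')\geq r$, while the complete De Morgan law converts $\bigvee V=\bigvee W'$ into $\bigl(\bigwedge W\bigr)'$. Reading off the resulting infimum as $C_{\mathcal{T}}(U,r)$ then gives (I1) at once.

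With (I1) in hand, (I2), (I3) and (I5) are short. For (I3) the supremum ranges over $V\leq U$, so it cannot exceed $U$. For (I2) I would note that $\underline{\top}\leq\underline{\top}$ with $\mathcal{T}(\underline{\top})=\top\geq r$ by (O1), so $\underline{\top}$ itself lies in the defining family; alternatively one invokes (I1) together with (C1) and $\underline{\bot}'=\underline{\top}$. For (I5), replacing $r$ by $s\geq r$ only shrinks the index set $\{V\mid V\leq U,\ \mathcal{T}(V)\geq s\}$, so its supremum can only decrease.

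The one step that genuinely needs the complete Heyting hypothesis is (I4). The inequality $\mathcal{I}_{\mathcal{T}}(U\wedge V,r)\leq\mathcal{I}_{\mathcal{T}}(U,r)\wedge\mathcal{I}_{\mathcal{T}}(V,r)$ is pure monotonicity. For the reverse I would expand both interiors as suprema and apply the infinite distributive law to rewrite $\mathcal{I}_{\mathcal{T}}(U,r)\wedge\mathcal{I}_{\mathcal{T}}(V,r)$ as a supremum of terms $W_1\wedge W_2$ with $W_1\leq U$, $W_2\leq V$ and $\mathcal{T}(W_i)\geq r$; then $W_1\wedge W_2\leq U\wedge V$ and, by (O2), $\mathcal{T}(W_1\wedge W_2)\geq\mathcal{T}(W_1)\wedge\mathcal{T}(W_2)\geq r$, so each such term already belongs to the family defining $\mathcal{I}_{\mathcal{T}}(U\wedge V,r)$. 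This is where I expect the only real friction: the distributive step is precisely the point where completeness of the Heyting structure enters, and the bookkeeping of the double supremum has to be carried out carefully.

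Finally (I6) and (I7) follow from the machinery above. For (I6), property (I3) supplies the inequality $\leq$; for $\geq$ I would observe that $\mathcal{I}_{\mathcal{T}}(U,r)$ is itself a member of the family defining its own interior, because it is a join of subsets of openness degree $\geq r$ and (O3) forces $\mathcal{T}(\mathcal{I}_{\mathcal{T}}(U,r))\geq r$. For (I7) I would take complements in the hypothesis and apply (I1) twice, namely $\mathcal{I}_{\mathcal{T}}(C_{\mathcal{T}}(U,r),r)'=C_{\mathcal{T}}(C_{\mathcal{T}}(U,r)',r)=C_{\mathcal{T}}(\mathcal{I}_{\mathcal{T}}(U',r),r)$; since the left-hand side equals $U'$ by assumption, the desired identity drops out.
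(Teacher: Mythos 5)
The first thing to note is that the paper does not prove this theorem at all: it is imported verbatim from \cite{kim1} as a preliminary result, so there is no in-paper proof to match your attempt against; your reconstruction can only be judged against the axioms (O1)--(O3) and the definition of $C_{\mathcal{T}}$ given in Section 2. Against that standard, your argument is essentially sound: the reindexing $W=V'$ for (I1) is exactly the right De Morgan computation; (I2), (I3), (I5) are as routine as you say; (I6) via the observation that (O3) forces $\mathcal{T}(\mathcal{I}_{\mathcal{T}}(U,r))\geq r$ is correct; and the double application of (I1), using that $'$ is an involution, settles (I7).

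The one step I would push back on is (I4). You prove the nontrivial inequality by distributing the meet across the two defining suprema, and you yourself flag this as the point "where completeness of the Heyting structure enters." But the paper's standing hypothesis is only that $L$ is a complete De Morgan algebra; the infinite distributive law is merely \emph{defined} in the preliminaries, never declared to hold for the ambient $L$, and a complete De Morgan algebra need not be a frame. So as written, (I4) rests on a hypothesis you have not been granted. Fortunately the appeal to distributivity is avoidable by the same device you already use for (I6): by (O3), $\mathcal{T}(\mathcal{I}_{\mathcal{T}}(U,r))\geq r$ and $\mathcal{T}(\mathcal{I}_{\mathcal{T}}(V,r))\geq r$; hence by (O2), $\mathcal{T}\bigl(\mathcal{I}_{\mathcal{T}}(U,r)\wedge\mathcal{I}_{\mathcal{T}}(V,r)\bigr)\geq r$, and since this meet lies below $U\wedge V$, it is itself a single member of the family defining $\mathcal{I}_{\mathcal{T}}(U\wedge V,r)$, giving $\mathcal{I}_{\mathcal{T}}(U,r)\wedge\mathcal{I}_{\mathcal{T}}(V,r)\leq \mathcal{I}_{\mathcal{T}}(U\wedge V,r)$ directly. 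This version requires no distributive law and no double-supremum bookkeeping, and it is valid under the paper's literal hypotheses; with that replacement your proof is complete.
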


\begin{defn} Let $(X,\mathcal{T})$ be an $L$-fuzzy topological space. For $U\in L^X$ and $r\in L$, $U$ is called:\begin{enumerate}
\item[(1)]An $r$-fuzzy preopen (resp. $r$-fuzzy preclosed) \cite{kim1} if $U\leq \mathcal{I}_{\mathcal{T}}(C_{\mathcal{T}}(U,r),r)$ (resp. $C_{\mathcal{T}}(\mathcal{I}_{\mathcal{T}}(U,r),r)\leq U$).
\item[(2)]An $r$-fuzzy regular open (resp. $r$-fuzzy regular closed) \cite{2Lee} if $U=\mathcal{I}_{\mathcal{T}}(C_{\mathcal{T}}(U,r),r)$ (resp. $U=C_{\mathcal{T}}(\mathcal{I}_{\mathcal{T}}(U,r),r)$).
\item[(3)]An $r$-fuzzy $\alpha$-open (resp. $r$-fuzzy $\alpha$-closed)\cite{kim1} if $U\leq\mathcal{I}_{\mathcal{T}}(C_{\mathcal{T}}(\mathcal{I}_{\mathcal{T}}(U,r),r),r)$(resp. $C_{\mathcal{T}}(\mathcal{I}_{\mathcal{T}}(C_{\mathcal{T}}(U,r),r),r)\leq U$).
\end{enumerate}
\end{defn}

\begin{defn} Let $(X,\mathcal{T})$ be an $L$-fuzzy topological space. For $U\in L^X$ and $r\in L$, $U$ is called an $r$-fuzzy Semi-Preopen subset (resp. $r$-fuzzy Semi-Preclosed) if $U\leq C_{\mathcal{T}}(\mathcal{I}_{\mathcal{T}}(C_{\mathcal{T}}(U,r),r),r)$ (res. $\mathcal{I}_{\mathcal{T}}(C_{\mathcal{T}}(\mathcal{I}_{\mathcal{T}}(U,r),r),r)\leq U$).
\end{defn}
The Semi-Preinterior $sp\mathcal{I}_{\mathcal{T}}$ and Semi-Preclosur $spC_{\mathcal{T}}$ operators in $L$-fuzzy topological space $(X,\mathcal{T})$ defined as follows:\[sp\mathcal{I}_{\mathcal{T}}(U,r)=\bigvee\{V\in L^X|V\leq U\quad\mbox{and}\quad V\mbox{  is $r$-fuzzy Semi-Preopen}\},\]\[spC_{\mathcal{T}}(U,r)=\bigwedge\{V\in L^X|U\leq V\quad\mbox{and}\quad V\mbox{  is $r$-fuzzy Semi-Preclosed}\}.\]

\begin{defn}Let $f: (X,\mathcal{T}_{1})\rightarrow (Y,\mathcal{T}_{2})$ be a function from an $L$-fuzzy topological space $(X,\mathcal{T}_{1})$ into an $L$-fuzzy topological space $(Y,\mathcal{T}_{2})$. The function $f_L^\rightarrow$ is called:\begin{enumerate}
\item[(1)] An $L$-fuzzy Semi-Preclosed (resp. $L$-fuzzy Semi-Preopen)if $f_L^\rightarrow(U)$ is $r$-fuzzy Semi-Preclosed (resp. $r$-fuzzy Semi-Preopen )subset in $L^{Y}$ for each $U\in L^{X}$ and $r\in L$ such that $\mathcal{T}_{1}(U')\geq r$ (resp. $\mathcal{T}_{1}(U)\geq r$),
\item[(2)] An $L$-fuzzy almost open \cite{kim1}, if $\mathcal{T}_{2}(f_L^\rightarrow(U))\geq r$ for each
$r$-fuzzy regular open subset $U\in L^{X}$ and $r\in L$,
\item[(2)] An $L$-fuzzy strongly continuous \cite{kim1}, if $f_L^\rightarrow(C_{\mathcal{T}_{1}}(U,r))\leq f_L^\rightarrow(U)$ for every
$U\in L^{X}$ and $r\in L$,
\item[(3)] An $L$-fuzzy weakly open (resp. $L$-fuzzy weakly closed)\cite{kim1}, if $f_L^\rightarrow(U)\leq\mathcal{I}_{\mathcal{T}_{2}}(f_L^\rightarrow(C_{\mathcal{T}_{1}}(U,r)),r)$ (resp. $C_{\mathcal{T}_{2}}(f_L^\rightarrow(\mathcal{I}_{\mathcal{T}_{1}}(U,r)),r)\leq f_L^\rightarrow(U)$ ) for each $U\in L^{X}$ and $r\in L$ such that $\mathcal{T}_{1}(U)\geq r$ (resp. $\mathcal{T}_{1}(U')\geq r$).
\end{enumerate}
\end{defn}

\begin{defn}\cite{Dimirci} Let $(X,\mathcal{T})$ be an $L$-fuzzy topological space, $U\in L^X$, $x_\lambda\in J(L^X)$ and $r\in L$. $U$ is called an $r$-fuzzy Q-neighborhood of $x_\lambda$ if $\mathcal{T}(U)\geq r$ and $x_\lambda q U$.
\end{defn}
We will denote the set of all $r$-fuzzy open Q-neighborhood of $x_\lambda$ by $\mathcal{Q}_{\mathcal{T}}(x_\lambda,r)$.

\begin{defn}\cite{kim2} Let $(X,\mathcal{T})$ be an $L$-fuzzy topological space, $U\in L^X$, $x_\lambda\in J(L^X)$ and $r\in L$. $x_\lambda$ is called an $r$-fuzzy $\theta$-cluster point of $U$ if for each $V\in\mathcal{Q}_{\mathcal{T}}(x_\lambda,r)$, we have $C_{\mathcal{T}}(V,r)q U$.
\end{defn}
We denote $D_{\mathcal{T}}(U,r)=\bigvee\{x_{\lambda}\in J(L^X)|
 x_{\lambda}\mbox{   is $r$-fuzzy $\theta$-cluster point
 of $U$}\}$. Where $D_{\mathcal{T}}(U,r)$ is called
 $r$-fuzzy $\theta$-closure of $U$.

\begin{thm}[\cite{kim2}] Let $(X,\mathcal{T})$ an $L$-fuzzy topological space. For $U$, $V\in L^{X}$ and $r$, $s\in L$, we have the following:\begin{enumerate}
\item[(1)] $D_{\mathcal{T}}(U,r)=\bigwedge\{V\in L^{X}| U\leq \mathcal{I}_{\mathcal{T}}(V,r),\,\mathcal{T}(V')\geq r\}$.
\item[(2)] $x_{\lambda}$ is $r$-fuzzy $\theta$-cluster point of $U$ iff $x_{\lambda}\in D_{\mathcal{T}}(U,r)$.
\item[(3)] $C_{\mathcal{T}}(U,r)\leq D_{\mathcal{T}}(U,r)$.
\item[(4)] If $\mathcal{T}(U)\geq r$, then $C_{\mathcal{T}}(U,r)=D_{\mathcal{T}}(U,r)$.
\item[(5)]If $U$ is $r$-fuzzy preopen, then $C_{\mathcal{T}}(U,r)=D_{\mathcal{T}}(U,r)$.
\item[(6)] If $U$ is $r$-fuzzy preopen and $\lambda=C_{\mathcal{T}}(\mathcal{I}_{\mathcal{T}}(U,r),r)$, then $D_{\mathcal{T}}(U,r)=U$.
\end{enumerate}
\end{thm}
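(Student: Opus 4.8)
The plan is to treat parts (1) and (2) as the substantive core and then read off (3)--(6) as formal consequences. The two facts I will lean on throughout are the complementation dictionary for quasi-coincidence---namely $A\,\neg q\,B \iff A\le B'$, equivalently $x_\lambda\,q\,A \iff \lambda\not\le A(x)'$, together with the symmetry of $q$---and the interior/closure duality $\mathcal{I}_{\mathcal{T}}(V',r)=C_{\mathcal{T}}(V,r)'$ supplied by (I1). I will also use that $L^X$ is completely distributive, so an inequality $A\le B$ may be tested on co-prime points $x_\lambda\in J(L^X)$, and that the pointwise join $D_{\mathcal{T}}(U,r)$ satisfies $D_{\mathcal{T}}(U,r)(x)=\bigvee\{\mu\mid x_\mu\ \mbox{is an $r$-fuzzy $\theta$-cluster point of }U\}$.

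First I would prove (2). The forward direction is immediate from the definition of $D_{\mathcal{T}}(U,r)$ as the join of all cluster points. For the converse, suppose $x_\lambda\le D_{\mathcal{T}}(U,r)$, so evaluating at $x$ gives $\lambda\le\bigvee\{\mu\mid x_\mu\ \mbox{a cluster point}\}$. Given any $V\in\mathcal{Q}_{\mathcal{T}}(x_\lambda,r)$, i.e. $\mathcal{T}(V)\ge r$ and $\lambda\not\le V(x)'$, the relation $\lambda\le\bigvee\mu$ forces some cluster point $x_\mu$ with $\mu\not\le V(x)'$ (otherwise every such $\mu\le V(x)'$ would give $\lambda\le V(x)'$); hence $V\in\mathcal{Q}_{\mathcal{T}}(x_\mu,r)$ and $C_{\mathcal{T}}(V,r)\,q\,U$ because $x_\mu$ is a cluster point. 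As $V$ was arbitrary, $x_\lambda$ is a cluster point. Next I would establish (1) using (2). For $D_{\mathcal{T}}(U,r)\le\bigwedge\{V\mid U\le\mathcal{I}_{\mathcal{T}}(V,r),\ \mathcal{T}(V')\ge r\}$, fix such a $V$ and a co-prime $x_\lambda\le D_{\mathcal{T}}(U,r)$ with, for contradiction, $x_\lambda\not\le V$. Then $\lambda\not\le V(x)$ gives $x_\lambda\,q\,V'$, and $\mathcal{T}(V')\ge r$ makes $V'\in\mathcal{Q}_{\mathcal{T}}(x_\lambda,r)$; since $x_\lambda$ is a cluster point, $C_{\mathcal{T}}(V',r)\,q\,U$. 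But $U\le\mathcal{I}_{\mathcal{T}}(V,r)=C_{\mathcal{T}}(V',r)'$ by (I1) says $U\,\neg q\,C_{\mathcal{T}}(V',r)$, a contradiction. For the reverse inclusion, take a co-prime $x_\lambda$ below the meet that fails to be a cluster point, choose $W\in\mathcal{Q}_{\mathcal{T}}(x_\lambda,r)$ with $C_{\mathcal{T}}(W,r)\,\neg q\,U$, and set $V=W'$; then $\mathcal{T}(V')=\mathcal{T}(W)\ge r$ and $U\le C_{\mathcal{T}}(W,r)'=\mathcal{I}_{\mathcal{T}}(W',r)=\mathcal{I}_{\mathcal{T}}(V,r)$, so $V$ lies in the index set while $x_\lambda\not\le V$, contradicting $x_\lambda\le\bigwedge$.

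Parts (3)--(6) then fall out of (1). Since $\mathcal{I}_{\mathcal{T}}(V,r)\le V$ by (I3), the index set in (1) is contained in the one defining $C_{\mathcal{T}}(U,r)$, and a meet over a larger set is smaller, giving $C_{\mathcal{T}}(U,r)\le D_{\mathcal{T}}(U,r)$, which is (3). For (4) and (5) it suffices, by (3), to show that $V:=C_{\mathcal{T}}(U,r)$ itself belongs to the index set of (1): it is $r$-closed, so $\mathcal{T}(V')\ge r$, and the remaining requirement $U\le\mathcal{I}_{\mathcal{T}}(C_{\mathcal{T}}(U,r),r)$ is exactly the preopenness hypothesis in (5), while in (4) it follows from $\mathcal{T}(U)\ge r$ via $U=\mathcal{I}_{\mathcal{T}}(U,r)\le\mathcal{I}_{\mathcal{T}}(C_{\mathcal{T}}(U,r),r)$. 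Finally, reading the hypothesis of (6) as $U=C_{\mathcal{T}}(\mathcal{I}_{\mathcal{T}}(U,r),r)$ ($\lambda$ appearing there seems to be a typo for $U$), idempotency (C5) yields $C_{\mathcal{T}}(U,r)=U$, and combining this with (5) gives $D_{\mathcal{T}}(U,r)=U$.

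The main obstacle I anticipate is the bookkeeping of quasi-coincidence under complementation in (1) and (2): every step turns on correctly converting $\le$ into $\neg q$ and back and on invoking the duality (I1) at precisely the right place, and a single misplaced complement collapses the argument. Once that dictionary is nailed down and the co-prime reduction is in hand, parts (3)--(6) are essentially routine substitutions into the formula of (1).
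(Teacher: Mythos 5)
The paper never proves this theorem: it is imported verbatim from \cite{kim2} and used as a black box, so there is no in-paper proof to compare your argument against. Judged on its own merits, your proof is correct. The quasi-coincidence dictionary $A\,\neg q\,B\iff A\le B'$ is applied consistently; the converse half of (2) needs only that a join of elements all below $V(x)'$ stays below $V(x)'$; both inclusions in (1) correctly exploit (I1) to pass between $\mathcal{I}_{\mathcal{T}}(V,r)$ and $C_{\mathcal{T}}(V',r)'$; and your derivation of (3)--(6) from the formula in (1) is clean, including the observation that (4) and (5) both reduce to showing $C_{\mathcal{T}}(U,r)$ itself lies in the index family (for which you need $\mathcal{T}(C_{\mathcal{T}}(U,r)')\ge r$, which follows from (O3)). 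Your reading of the stray $\lambda$ in (6) as a typo for $U$ is also right --- in \cite{kim2} the fuzzy set is denoted $\lambda$, and the renaming was evidently missed --- after which (C5) gives $C_{\mathcal{T}}(U,r)=U$ and (5) finishes. One caveat you should state rather than merely anticipate: reducing the inequalities in (1) to co-prime points $x_\lambda\in J(L^X)$, and the defining formula for $D_{\mathcal{T}}$ as a join over $J(L^X)$, require $L$ to be completely distributive, not just a complete DeMorgan algebra; this matches the paper's standing framework (it invokes $\beta(b)$, $J(L)$, and point-based definitions throughout), but it is an assumption your argument genuinely uses, notably in both halves of (1).
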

The complement of $r$-fuzzy $\theta$-closed set is called $r$-fuzzy
$\theta$-open and the $r$-fuzzy $\theta$-interior operator denoted
by $T_{\mathcal{T}}(U,r)$ is defined by
\[T_{\mathcal{T}}(U,r)=\bigvee\{V\in L^{X}| C_{\mathcal{T}}(V,r)\leq
U,\,\mathcal{T}(V)\geq r\}.\]

\section{$L$-Fuzzy Weakly Semi-Preopen ( Semi-Preclosed ) functions}

\begin{defn} A function $f:(X,\mathcal{T}_1)\rightarrow (Y,\mathcal{T}_2)$ is said to be:\begin{enumerate} \item[(a)] An $L$-fuzzy weakly Semi-Preopen function if $f_L^\rightarrow(U)\leq  sp\mathcal{I}_{\mathcal{T}_2}(f_L^\rightarrow(C_{\mathcal{T}_1}(U,r)),r)$ for each $U\in L^X$ and $r\in L$ such that $\mathcal{T}_1(U)\geq r$.
\item[(b)] An $L$-fuzzy weakly Semi-Preclosed function if $spC_{\mathcal{T}_2}(f_L^\rightarrow(\mathcal{I}_{\mathcal{T}_1}(U,r)),r)\leq f_L^\rightarrow(U)$ for each $U\in L^X$ and $r\in L$ such that $\mathcal{T}_1(U')\geq r$.\end{enumerate}
\end{defn}

\begin{rmk}\begin{enumerate}\item Every $L$-fuzzy weakly open function is $L$-fuzzy weakly Semi-Preopen function and every $L$-fuzzy Semi-preopen function is also $L$-fuzzy weakly Semi-preopen function. But the converse need not be true in general.
\item Every $L$-fuzzy weakly closed function is $L$-fuzzy weakly Semi-Preclosed but the converse need not be true in general.
\end{enumerate}
\end{rmk}

\begin{cexa}\begin{enumerate}
\item[(1)] Let $L=[0,1]$, $X=\{a,b,c\}$ and $Y=\{x,y,z\}$. The fuzzy subsets $U$, $V$ and $W$ are defined as: \begin{eqnarray*} U(a)=0.5,\quad &U(b)=0.3&,\quad U(c)=0.2;\\
    V(x)=0.9,\quad &V(y)=1&,\quad V(z)=0.7;\\
    W(x)=0.2,\quad &W(y)=0.9&,\quad W(z)=0.3.
    \end{eqnarray*}
    Let $\mathcal{T}_1:I^X\rightarrow I$ and $\mathcal{T}_2:I^Y\rightarrow I$ defined as follows:
\[ \mathcal{T}_1(A)=\left\{\begin{array}{ll}
1, & \hbox{if $A=\underline{0}$ or $\underline{1}$;} \\
\frac{1}{2}, & \hbox{if $A=U$;} \\
0, & \hbox{otherwise.}
\end{array}\right.\quad\mbox{and}\quad \mathcal{T}_2(A)=\left\{\begin{array}{ll}
1, & \hbox{if $B=\underline{0}$ or $\underline{1}$;} \\
\frac{1}{2}, & \hbox{if $B=V$;} \\
\frac{1}{4}, & \hbox{if $B=W$;} \\
0, & \hbox{otherwise.}
\end{array}\right.\] Then the function $f:(X,\mathcal{T}_1)\rightarrow (Y,\mathcal{T}_2)$ defined by \[f(a)=z,\quad f(b)=x,\quad f(c)=y,\] is $I$-fuzzy weakly Semi-Preopen but not $I$-fuzzy weakly open.
\item[(2)] Let $L=I=[0,1]$ and $X=\{a,b,c\}$. The fuzzy subsets $U$, $V$, $W$ and $H$ are defined as follows:\begin{eqnarray*} U(a)=0.4,\quad &U(b)=0.7&,\quad A(c)=0.2;\\
V(a)=0.3,\quad &V(b)=0.1&,\quad V(c)=0.6;\\
W(a)=0.5,\quad &W(b)=0.8&,\quad W(c)=0.3;\\
H(a)=0.4,\quad &H(b)=0.2&,\quad H(c)=0.7.
\end{eqnarray*}
Let $\mathcal{T}_1$, $\mathcal{T}_2:I^X\rightarrow I$ defined as follows:

\[ \mathcal{T}_1(A)=\left\{\begin{array}{ll}
1, & \hbox{if $A=\underline{0}$ or $\underline{1}$;} \\
\frac{1}{2}, & \hbox{if $A=U$;} \\
\frac{1}{4}, & \hbox{if $A=V$;} \\
0, & \hbox{otherwise.}
\end{array}\right.\quad\mbox{and}\quad \mathcal{T}_2(A)=\left\{\begin{array}{ll}
1, & \hbox{if $A=\underline{0}$ or $\underline{1}$;} \\
\frac{1}{2}, & \hbox{if $A=W$;} \\
\frac{1}{4}, & \hbox{if $A=H$;} \\
0, & \hbox{otherwise.}
\end{array}\right.\]
Then the identity function $i:(X,\mathcal{T}_1)\rightarrow (X,\mathcal{T}_2)$ is $I$-fuzzy weakly Semi-Preopen function but not $I$-fuzzy Semi-Preopen.
\item[(3)] Let $L=[0,1]$, $X=\{a,b\}$ and $Y=\{x,y\}$. The fuzzy subsets $U$, $V$ are defined as follows:\begin{eqnarray*}U(a)=0.5&,&\quad U(b)=0.6;\\
     V(x)=0.4&,&\quad V(y)=0.3.
    \end{eqnarray*}
    Let $\mathcal{T}_1:I^X\rightarrow I$ and $\mathcal{T}_2:I^Y\rightarrow I$ defined as follows:
\[ \mathcal{T}_1(A)=\left\{\begin{array}{ll}
1, & \hbox{if $A=\underline{0}$ or $\underline{1}$;} \\
\frac{1}{2}, & \hbox{if $A=U$;} \\
0, & \hbox{otherwise.}
\end{array}\right.\quad\mbox{and}\quad \mathcal{T}_2(B)=\left\{\begin{array}{ll}
1, & \hbox{if $B=\underline{0}$ or $\underline{1}$;} \\
\frac{1}{2}, & \hbox{if $B=V$;} \\
0, & \hbox{otherwise.}
\end{array}\right.\] The function $f:(X,\mathcal{T}_1)\rightarrow (Y,\mathcal{T}_2)$ defined by\[ f(a)=x,\quad f(b)=y,\] is $I$-fuzzy weakly Semi-Preclosed but not $I$-fuzzy weakly closed.
\end{enumerate}
\end{cexa}

\begin{thm}Let $(X,\mathcal{T}_1)$ and $(Y,\mathcal{T}_2)$ are $L$-fuzzy topological spaces. The function $f:(X,\mathcal{T}_1)\rightarrow (Y,\mathcal{T}_2)$ is $L$-fuzzy weakly Semi-Preopen (resp. $L$-fuzzy weakly Semi-Preclosed) iff $f:(X,{\mathcal{T}_1}_{[a]})\rightarrow (Y,{\mathcal{T}_2}_{[a]})$ for each $a\in J(L)$.
\end{thm}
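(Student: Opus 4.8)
The plan is to reduce both sides of the equivalence to a single family of parametrized inequalities and then to compare the index sets $L$ and $J(L)$. First I would record the bridge between the $L$-fuzzy operators and the operators of the level $L$-topologies. By the level characterization of $L$-fuzzy topologies (Theorem 2.1, Zhang), ${\mathcal{T}_1}_{[a]}$ and ${\mathcal{T}_2}_{[a]}$ are genuine $L$-topologies for every $a\in J(L)$, so the right-hand assertion is meaningful. Straight from the definitions, the Chang-type closure and interior of $(X,{\mathcal{T}_i}_{[a]})$ agree with the $L$-fuzzy operators evaluated at $a$, namely $C_{{\mathcal{T}_i}_{[a]}}(U)=C_{\mathcal{T}_i}(U,a)$ and $\mathcal{I}_{{\mathcal{T}_i}_{[a]}}(U)=\mathcal{I}_{\mathcal{T}_i}(U,a)$, because the families $\{V\mid \mathcal{T}_i(V')\ge a\}$ and $\{V\mid \mathcal{T}_i(V)\ge a\}$ are exactly the closed, resp.\ open, sets of ${\mathcal{T}_i}_{[a]}$. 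Consequently an $L$-fuzzy subset is $a$-fuzzy Semi-Preopen for $\mathcal{T}_i$ precisely when it is Semi-Preopen in $(X,{\mathcal{T}_i}_{[a]})$, and hence the semi-preinterior and semi-preclosure match as well: $sp\mathcal{I}_{{\mathcal{T}_i}_{[a]}}(U)=sp\mathcal{I}_{\mathcal{T}_i}(U,a)$ and $spC_{{\mathcal{T}_i}_{[a]}}(U)=spC_{\mathcal{T}_i}(U,a)$.

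Using these identities I would unwind the statement that $f\colon (X,{\mathcal{T}_1}_{[a]})\to (Y,{\mathcal{T}_2}_{[a]})$ is weakly Semi-Preopen. In the ordinary (Chang) sense this reads $f_L^\rightarrow(U)\le sp\mathcal{I}_{{\mathcal{T}_2}_{[a]}}(f_L^\rightarrow(C_{{\mathcal{T}_1}_{[a]}}(U)))$ for every ${\mathcal{T}_1}_{[a]}$-open $U$, i.e.\ for every $U$ with $\mathcal{T}_1(U)\ge a$; by the bridge identities this is \emph{verbatim} the defining inequality of $L$-fuzzy weak Semi-Preopenness read at the single parameter $r=a$. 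Denote by $(\ast_r)$ the implication $\mathcal{T}_1(U)\ge r \Rightarrow f_L^\rightarrow(U)\le sp\mathcal{I}_{\mathcal{T}_2}(f_L^\rightarrow(C_{\mathcal{T}_1}(U,r)),r)$, demanded for all $U\in L^X$. The whole theorem then reduces to the purely lattice-theoretic claim that $(\ast_r)$ holding for \emph{all} $r\in L$ is equivalent to $(\ast_a)$ holding for all $a\in J(L)$. The Semi-Preclosed case is entirely dual, replacing $sp\mathcal{I}$, $C_{\mathcal{T}_1}$ and the condition $\mathcal{T}_1(U)\ge r$ by $spC$, $\mathcal{I}_{\mathcal{T}_1}$ and $\mathcal{T}_1(U')\ge r$, and I would treat it by the same scheme.

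One implication is immediate: since $J(L)\subseteq L$, if $(\ast_r)$ holds for every $r\in L$ it holds for every $a\in J(L)$, which settles the forward direction. The converse carries the real work. Here I would fix $U$ and an arbitrary $r\in L$ with $\mathcal{T}_1(U)\ge r$ and try to recover $(\ast_r)$ from the instances $(\ast_a)$ at the co-prime elements $a\le r$, exploiting that in a completely distributive $L$ we may write $r=\bigvee\{a\in J(L)\mid a\prec r\}$. The main obstacle is that $r$ enters $(\ast_r)$ through two \emph{opposing} monotonicities: by (C4) the argument $C_{\mathcal{T}_1}(U,a)$ increases toward $C_{\mathcal{T}_1}(U,r)$ as $a\uparrow r$, whereas the semi-preinterior $sp\mathcal{I}_{\mathcal{T}_2}(\,\cdot\,,a)$ is, like an interior (cf.\ (I5)), antitone in its parameter; so one cannot simply join the level-$a$ inequalities, and closure is in general not left-continuous in the parameter. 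I expect to resolve this not by a limiting argument but pointwise over co-prime values: to prove $f_L^\rightarrow(U)\le sp\mathcal{I}_{\mathcal{T}_2}(f_L^\rightarrow(C_{\mathcal{T}_1}(U,r)),r)$ it suffices, by the infinite distributive law, to test it against each co-prime $x_\lambda\le f_L^\rightarrow(U)$, and then to produce inside the Semi-Preopen family defining $sp\mathcal{I}_{\mathcal{T}_2}(\,\cdot\,,r)$ a witness assembled from the level-$a$ data. Verifying that such a witness is genuinely $r$-fuzzy Semi-Preopen, rather than merely $a$-fuzzy Semi-Preopen, is the step I anticipate to be the crux, and the one that will lean most heavily on complete distributivity together with the properties (C1)--(C5) and (I1)--(I7).
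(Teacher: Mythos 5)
Your reduction is set up correctly: the bridge identities $C_{{\mathcal{T}_i}_{[a]}}(U)=C_{\mathcal{T}_i}(U,a)$ and $\mathcal{I}_{{\mathcal{T}_i}_{[a]}}(U)=\mathcal{I}_{\mathcal{T}_i}(U,a)$ do hold (the level families are exactly the closed, resp. open, sets of ${\mathcal{T}_i}_{[a]}$, and these families are closed under the relevant infima/suprema), hence $a$-fuzzy Semi-Preopenness coincides with Semi-Preopenness in the level space and the semi-preinterior/semi-preclosure operators match. With that, the forward implication ($J(L)\subseteq L$) is indeed immediate. But the converse --- which is the \emph{entire} nontrivial content of the theorem --- is never proved. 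Your last paragraph correctly diagnoses the obstacle and then stops at the point where the proof would have to begin: you say you ``expect to resolve'' it by producing a witness inside the family defining $sp\mathcal{I}_{\mathcal{T}_2}(\cdot,r)$ and that verifying this witness is $r$-fuzzy Semi-Preopen ``is the step I anticipate to be the crux.'' An anticipated crux is not a proof; as submitted, the argument establishes only one direction.

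The gap is not cosmetic, because the obstacle you identified is real and the natural attempts to close it fail. For $a\prec r$ one gets from $(\ast_a)$ that $f_L^\rightarrow(U)\leq sp\mathcal{I}_{\mathcal{T}_2}(f_L^\rightarrow(C_{\mathcal{T}_1}(U,r)),a)$ (using (C4) and monotonicity in the set argument), so the converse would follow if $\bigwedge_{a\in\beta^*(r)} sp\mathcal{I}_{\mathcal{T}_2}(V,a)\leq sp\mathcal{I}_{\mathcal{T}_2}(V,r)$; but this inequality has no evident justification, since meets of Semi-Preopen sets need not be Semi-Preopen even at a fixed level, and for $a\leq r$ the classes of $a$-fuzzy and $r$-fuzzy Semi-Preopen sets are incomparable: in $U\leq C_{\mathcal{T}}(\mathcal{I}_{\mathcal{T}}(C_{\mathcal{T}}(U,\cdot),\cdot),\cdot)$ the two closures move up with the parameter while the interior moves down, so neither class contains the other, and a join $\bigvee_a V_a$ of $a$-fuzzy Semi-Preopen sets need not be $r$-fuzzy Semi-Preopen for $r=\bigvee a$. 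Consequently the ``witness assembled from level-$a$ data'' has no evident construction, and it is not even clear the statement holds at the stated generality of a complete DeMorgan algebra without further hypotheses. The paper is of no help here --- its proof reads, in full, ``Straightforward'' --- so there is no argument to fall back on: to have a proof you must either construct this witness explicitly and verify its $r$-fuzzy Semi-Preopenness from (C1)--(C5), (I1)--(I7) and complete distributivity, or restrict the claim (e.g. to the levels $a\in J(L)$ on the fuzzy side as well, or to lattices where $\beta^*(r)$ is directed and the operators behave continuously in the parameter), or exhibit a counterexample.
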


\begin{proof}Straightforward.
\end{proof}

\begin{thm}For a function $f:(X,\mathcal{T}_1)\rightarrow (Y,\mathcal{T}_2)$, the following conditions are equivalent:\begin{enumerate}
\item[(1)] $f_L^\rightarrow$ is $L$-fuzzy weakly Semi-Preopen function;
\item[(2)] $f_L^\rightarrow(T_{\mathcal{T}_1}(U,r))\leq sp\mathcal{I}_{\mathcal{T}_2}(f_L^\rightarrow(U),r)$ for each $U\in L^X$ and $r\in L$;
\item[(3)] $T_{\mathcal{T}_1}(f_L^\leftarrow(V),r)\leq f_L^\leftarrow(sp\mathcal{I}_{\mathcal{T}_2}(V,r))$ for each $V\in L^Y$ and $r\in L$;
\item[(4)] $f_L^\leftarrow(spC_{\mathcal{T}_2}(V,r))\leq D_{\mathcal{T}_1}(f_L^\leftarrow(V),r)$ for each $V\in L^Y$ and $r\in L$;
\item[(5)] For each $x_\lambda\in J(L^X)$ and $U\in L^X$ such that $\mathcal{T}_1(\lambda)\geq r$ and $x_\lambda\leq U$ there exists an $r$-fuzzy Semi-Preopen subset $V$ such that $f_L^\rightarrow(x_\lambda)\leq V$ and $V\leq f_L^\rightarrow(C_{\mathcal{T}_1}(U,r))$.
\end{enumerate}
\end{thm}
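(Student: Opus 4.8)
The plan is to prove the equivalences through the reversible chain $(1)\Leftrightarrow(2)\Leftrightarrow(3)\Leftrightarrow(4)$ together with the point-set equivalence $(1)\Leftrightarrow(5)$; in a strictly cyclic presentation the awkward arrow $(4)\Rightarrow(5)$ is then obtained by first recovering $(1)$ from $(4)$ along the reversible block and decomposing into fuzzy points. Two facts do most of the work. First, the two De Morgan dualities $T_{\mathcal{T}}(W',r)=D_{\mathcal{T}}(W,r)'$ (read off from the infimum description of $D_{\mathcal{T}}$ in Theorem 2.4(1) together with (I1)) and $sp\mathcal{I}_{\mathcal{T}}(W',r)=spC_{\mathcal{T}}(W,r)'$ (because complementation interchanges $r$-fuzzy Semi-Preopen and $r$-fuzzy Semi-Preclosed subsets, by applying (I1) repeatedly to the defining inequality of Definition 2.6). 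Second, the adjunction $f_L^\rightarrow(f_L^\leftarrow(V))\leq V\leq f_L^\leftarrow(f_L^\rightarrow(U))$, the identity $f_L^\leftarrow(V')=f_L^\leftarrow(V)'$, and the fact that $f_L^\rightarrow$ preserves arbitrary suprema. I also record that $sp\mathcal{I}_{\mathcal{T}_2}(A,r)$ is itself $r$-fuzzy Semi-Preopen (an arbitrary supremum of Semi-Preopen subsets is Semi-Preopen, by monotonicity of $C_{\mathcal{T}}$ and $\mathcal{I}_{\mathcal{T}}$), so it is the largest Semi-Preopen subset below $A$.

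For $(1)\Rightarrow(2)$ I would expand $T_{\mathcal{T}_1}(U,r)=\bigvee\{W\mid C_{\mathcal{T}_1}(W,r)\leq U,\ \mathcal{T}_1(W)\geq r\}$, apply Definition 3.1(a) to each such $W$ to get $f_L^\rightarrow(W)\leq sp\mathcal{I}_{\mathcal{T}_2}(f_L^\rightarrow(C_{\mathcal{T}_1}(W,r)),r)\leq sp\mathcal{I}_{\mathcal{T}_2}(f_L^\rightarrow(U),r)$ (using $C_{\mathcal{T}_1}(W,r)\leq U$ and monotonicity), and then take the supremum, which commutes with $f_L^\rightarrow$. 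The passage $(2)\Leftrightarrow(3)$ is pure adjunction: substitute $U=f_L^\leftarrow(V)$ in one direction and $V=f_L^\rightarrow(U)$ in the other, squeezing with $f_L^\rightarrow f_L^\leftarrow\leq\mathrm{id}\leq f_L^\leftarrow f_L^\rightarrow$ and the monotonicity of $T_{\mathcal{T}_1}$. The passage $(3)\Leftrightarrow(4)$ is pure duality: replace $V$ by $V'$ in $(3)$, rewrite $T_{\mathcal{T}_1}(f_L^\leftarrow(V)',r)=D_{\mathcal{T}_1}(f_L^\leftarrow(V),r)'$ and $sp\mathcal{I}_{\mathcal{T}_2}(V',r)=spC_{\mathcal{T}_2}(V,r)'$, move the outer $f_L^\leftarrow$ past the complement, and take complements to flip the inequality into $(4)$. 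To close the block I prove $(2)\Rightarrow(1)$: for open $U$ one has $U\leq T_{\mathcal{T}_1}(C_{\mathcal{T}_1}(U,r),r)$ (take $W=U$ in the defining supremum of $T$), so instantiating $(2)$ at $C_{\mathcal{T}_1}(U,r)$ gives $f_L^\rightarrow(U)\leq f_L^\rightarrow(T_{\mathcal{T}_1}(C_{\mathcal{T}_1}(U,r),r))\leq sp\mathcal{I}_{\mathcal{T}_2}(f_L^\rightarrow(C_{\mathcal{T}_1}(U,r)),r)$, which is exactly Definition 3.1(a).

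Finally I would treat $(1)\Leftrightarrow(5)$, reading the hypothesis (evidently a typo) as $\mathcal{T}_1(U)\geq r$ with $x_\lambda\leq U$. For $(1)\Rightarrow(5)$, take $V=sp\mathcal{I}_{\mathcal{T}_2}(f_L^\rightarrow(C_{\mathcal{T}_1}(U,r)),r)$; it is Semi-Preopen and below $f_L^\rightarrow(C_{\mathcal{T}_1}(U,r))$, while $f_L^\rightarrow(x_\lambda)\leq f_L^\rightarrow(U)\leq V$ by $(1)$. For $(5)\Rightarrow(1)$, write $U=\bigvee\{x_\lambda\in J(L^X)\mid x_\lambda\leq U\}$ (valid since $L$ is completely distributive), feed each $x_\lambda$ to $(5)$ to obtain a Semi-Preopen $V_{x_\lambda}\leq f_L^\rightarrow(C_{\mathcal{T}_1}(U,r))$ with $f_L^\rightarrow(x_\lambda)\leq V_{x_\lambda}$, absorb $V_{x_\lambda}\leq sp\mathcal{I}_{\mathcal{T}_2}(f_L^\rightarrow(C_{\mathcal{T}_1}(U,r)),r)$ by maximality of the Semi-Preinterior, and take the supremum using that $f_L^\rightarrow$ preserves it. I expect the main obstacle to be precisely this point-set translation in $(5)$: one must know that the co-prime fuzzy points below $U$ reconstruct $U$ and that an arbitrary supremum of $r$-fuzzy Semi-Preopen subsets is again Semi-Preopen, so that the Semi-Preinterior genuinely serves as the required neighborhood. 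The remaining arrows are routine once the two duality identities and the adjunction are in hand.
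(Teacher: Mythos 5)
Your proposal is correct and follows essentially the same route as the paper: the same reversible block $(1)\Leftrightarrow(2)\Leftrightarrow(3)\Leftrightarrow(4)$ via the adjunction $f_L^\rightarrow\dashv f_L^\leftarrow$ and the dualities $T_{\mathcal{T}}(W',r)=D_{\mathcal{T}}(W,r)'$, $sp\mathcal{I}_{\mathcal{T}}(W',r)=spC_{\mathcal{T}}(W,r)'$, plus the direct point-wise equivalence $(1)\Leftrightarrow(5)$ with the witness $V=sp\mathcal{I}_{\mathcal{T}_2}(f_L^\rightarrow(C_{\mathcal{T}_1}(U,r)),r)$. If anything, your treatment of $(1)\Rightarrow(2)$ (decomposing $T_{\mathcal{T}_1}(U,r)$ into open subsets whose closures lie under $U$ before invoking the definition) and of $(5)\Rightarrow(1)$ (decomposing $U$ into co-prime points of $X$ rather than points of $f_L^\rightarrow(U)$) is more careful than the paper's own wording, which applies weak Semi-Preopenness to an arbitrary $U$ rather than to the open sets beneath it.
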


\begin{proof}
\begin{description}
  \item[(1)$\Rightarrow$(2):] Let $x_\lambda\leq T_{\mathcal{T}_1}(U,r)$. Then $f_L^\rightarrow (x_\lambda)\leq f_L^\rightarrow(U)$. Since $f_L^\rightarrow$ is $L$-fuzzy weakly Semi-Preopen function, then\[f_L^\rightarrow (x_\lambda)\leq f_L^\rightarrow(U)\leq sp\mathcal{I}_{\mathcal{T}_2}(f_L^\rightarrow(C_{\mathcal{T}_1}(U,r)),r).\] Therefore, $x_\lambda\leq f_L^\leftarrow(sp\mathcal{I}_{\mathcal{T}_2}(f_L^\rightarrow(C_{\mathcal{T}_1}(U,r)),r))$. Thus $T_{\mathcal{T}_1}(U,r)\leq f_L^\leftarrow(sp\mathcal{I}_{\mathcal{T}_2}(f_L^\rightarrow(C_{\mathcal{T}_1}(U,r)),r))$, i.e.,  \[f_L^\rightarrow(T_{\mathcal{T}_1}(U,r))\leq sp\mathcal{I}_{\mathcal{T}_2}(f_L^\rightarrow(C_{\mathcal{T}_1}(U,r)),r).\]
  \item[(2)$\Rightarrow$(1):] Let $U\in L^X$ and $r\in L$ such that $\mathcal{T}_1(U)\geq r$. Since $U\leq T_{\mathcal{T}_1}(C_{\mathcal{T}_1}(U,r),r)$ and by using (2), we have\[f_L^\rightarrow(U)\leq f_L^\rightarrow(T_{\mathcal{T}_1}(C_{\mathcal{T}_1}(U,r),r))\leq sp\mathcal{I}_{\mathcal{T}_2}(f_L^\rightarrow(C_{\mathcal{T}_1}(U,r)),r).\] Hence $f_L^\rightarrow$ is $L$-fuzzy weakly Semi-Preopen function.
  \item[(2)$\Rightarrow$(3):] Let $V\in L^Y$. By using (2), we have $f_L^\rightarrow(T_{\mathcal{T}_1}(f_L^\leftarrow(V),r))\leq sp\mathcal{I}_{\mathcal{T}_2}(V,r)$. Therefore, $T_{\mathcal{T}_1}(f_L^\leftarrow(V),r)\leq f_L^\leftarrow(sp\mathcal{I}_{\mathcal{T}_2}(V,r)).$
\item[(3)$\Rightarrow$(2):] It is trivial and omitted.\medskip

\item[(3)$\Rightarrow$(4):] Let $V\in L^Y$ and $r\in L$. By using (3), we have \begin{eqnarray*}D_{\mathcal{T}_1}(f_L^\leftarrow(V),r)'&=&T_{\mathcal{T}_1}(f_L^\leftarrow(V)',r)=T_{\mathcal{T}_1}(f_L^\leftarrow(V'),r)\leq f_L^\leftarrow (sp\mathcal{I}_{\mathcal{T}_2}(V',r))\\
    &=& f_L^\leftarrow(spC_{\mathcal{T}_2}(V,r)')=(f_L^\leftarrow (spC_{\mathcal{T}_2}(V,r)))'.\end{eqnarray*}
    Therefore, we obtain $f_L^\leftarrow(spC_{\mathcal{T}_2}(V,r))\leq D_{\mathcal{T}_1}(f_L^\leftarrow(V),r)$.
\item[(4)$\Rightarrow$(3):] It is trivial and omitted.\medskip
\item[(1)$\Rightarrow$(5):] Let $x_\lambda\in J(L^X)$ and $U\in L^X$ such that $\mathcal{T}_1(U)\geq r$ and $x_\lambda\leq U$. Since $f_L^\rightarrow$ is $L$-fuzzy weakly Semi-Preopen, $f_L^\rightarrow(U)\leq sp\mathcal{I}_{\mathcal{T}_2}(f_L^\rightarrow(C_{\mathcal{T}_1}(U,r)),r)$. Let $V=sp\mathcal{I}_{\mathcal{T}_2}(f_L^\rightarrow(C_{\mathcal{T}_1}(U,r)),r)$. Then $V\leq f_L^\rightarrow(C_{\mathcal{T}_1}(U,r))$ with $f_L^\rightarrow(x_\lambda)\leq V$.
\item[(5)$\Rightarrow$(1):] Let $U\in L^X$ and $r\in L$ such that $\mathcal{T}_1(U)\geq r$ and let $y_\beta\leq f_L^\rightarrow(U)$. By using (2), we have $V\leq f_L^\rightarrow(C_{\mathcal{T}_2}(U,r))$ for some $r$-fuzzy Semi-Preopen subset $V\in L^Y$ and $y_\beta\leq V$. Hence we have, $y_\beta\leq V\leq sp\mathcal{I}_{\mathcal{T}_2}(f_L^\rightarrow(C_{\mathcal{T}_1}(U,r)),r)$. This shows that $f_L^\rightarrow (U)\leq sp\mathcal{I}_{\mathcal{T}_2}(f_L^\rightarrow(C_{\mathcal{T}_1}(U,r)),r)$. \end{description}
\end{proof}

\begin{thm} For a function $f:(X,\mathcal{T}_1)\rightarrow (Y,\mathcal{T}_2)$, the following conditions are equivalent:\begin{enumerate}
\item[(1)] $f_L^\rightarrow$ is $L$-fuzzy weakly Semi-Preopen function;
\item[(2)] $f_L^\rightarrow(\mathcal{I}_{\mathcal{T}_1}(U,r))\leq sp\mathcal{I}_{\mathcal{T}_2}(f_l^\rightarrow(U),r)$ for each $U\in L^X$ and $r\in L$ such that $\mathcal{T}_1(U')\geq r$;
\item[(3)] $f_L^\rightarrow(\mathcal{I}_{\mathcal{T}_1}(C_{\mathcal{T}_1}(U,r),r))\leq sp\mathcal{I}_{\mathcal{T}_2}(f_L^\rightarrow(C_{\mathcal{T}_1}(U,r)),r)$ for each $U\in L^X$ and $r\in L$ such that $\mathcal{T}_1(U)\geq r$;
\item[(4)] $f_L^\rightarrow(U)\leq sp\mathcal{I}(f_L^\rightarrow(C_{\mathcal{T}_1}(U,r)),r)$ for each $r$-fuzzy Preopen subset $U\in L^X$;
\item[(5)] $f_L^\rightarrow(U)\leq sp\mathcal{I}_{\mathcal{T}_2}(f_L^\rightarrow(C_{\mathcal{T}_1}(U,r)),r)$ for each $r$-fuzzy $\alpha$-open subset $U\in L^X$.
\end{enumerate}
\end{thm}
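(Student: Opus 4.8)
The plan is to establish all five equivalences along a single cycle $(1)\Rightarrow(2)\Rightarrow(3)\Rightarrow(4)\Rightarrow(5)\Rightarrow(1)$. Two facts are used repeatedly and I would record them at the outset: the operator $sp\mathcal{I}_{\mathcal{T}_2}(-,r)$ is monotone (immediate from its definition as a supremum over a family of Semi-Preopen sets that enlarges with its argument), and for any $U$ the set $C_{\mathcal{T}_1}(U,r)$ is $r$-closed, i.e. $\mathcal{T}_1(C_{\mathcal{T}_1}(U,r)')\geq r$ (since $C_{\mathcal{T}_1}(U,r)'$ is a supremum of sets of degree $\geq r$, so (O3) applies). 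Dually, $\mathcal{I}_{\mathcal{T}_1}(U,r)$ is always open. Everything else reduces to the laws (I3), (C2), (C5) and monotonicity of $C_{\mathcal{T}_1}$.

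For $(1)\Rightarrow(2)$, I would take $U$ with $\mathcal{T}_1(U')\geq r$ and apply the defining inequality (1) to the \emph{open} set $\mathcal{I}_{\mathcal{T}_1}(U,r)$, obtaining $f_L^\rightarrow(\mathcal{I}_{\mathcal{T}_1}(U,r))\leq sp\mathcal{I}_{\mathcal{T}_2}(f_L^\rightarrow(C_{\mathcal{T}_1}(\mathcal{I}_{\mathcal{T}_1}(U,r),r)),r)$. The crucial point is that $U$ being $r$-closed gives $C_{\mathcal{T}_1}(U,r)=U$, so by (I3) and monotonicity of $C_{\mathcal{T}_1}$ we have $C_{\mathcal{T}_1}(\mathcal{I}_{\mathcal{T}_1}(U,r),r)\leq C_{\mathcal{T}_1}(U,r)=U$; applying $f_L^\rightarrow$ and then the monotonicity of $sp\mathcal{I}_{\mathcal{T}_2}$ yields (2). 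For $(2)\Rightarrow(3)$, given $U$ open I set $W=C_{\mathcal{T}_1}(U,r)$, which is $r$-closed, and apply (2) to $W$; the resulting inequality is verbatim (3).

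For $(3)\Rightarrow(4)$, let $U$ be $r$-fuzzy preopen, so $U\leq\mathcal{I}_{\mathcal{T}_1}(C_{\mathcal{T}_1}(U,r),r)=:W$, where $W$ is open. I apply (3) to $W$. On the left I use $U\leq W=\mathcal{I}_{\mathcal{T}_1}(W,r)\leq\mathcal{I}_{\mathcal{T}_1}(C_{\mathcal{T}_1}(W,r),r)$ to get $f_L^\rightarrow(U)\leq f_L^\rightarrow(\mathcal{I}_{\mathcal{T}_1}(C_{\mathcal{T}_1}(W,r),r))$; on the right I note $W\leq C_{\mathcal{T}_1}(U,r)$ by (I3), hence $C_{\mathcal{T}_1}(W,r)\leq C_{\mathcal{T}_1}(U,r)$ by (C5), so $sp\mathcal{I}_{\mathcal{T}_2}(f_L^\rightarrow(C_{\mathcal{T}_1}(W,r)),r)\leq sp\mathcal{I}_{\mathcal{T}_2}(f_L^\rightarrow(C_{\mathcal{T}_1}(U,r)),r)$. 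Chaining these through (3) gives exactly (4). This is the step I expect to be the main obstacle, since it requires simultaneously squeezing the argument $U\leq W$ into the left-hand interior and forcing the two occurrences of $C_{\mathcal{T}_1}$ to line up via idempotency.

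Finally, $(4)\Rightarrow(5)$ and $(5)\Rightarrow(1)$ demand no computation: the displayed inequality is identical in (1), (4), (5), and only the admissible class of $U$ changes. Every $r$-fuzzy $\alpha$-open set is $r$-fuzzy preopen (replace $\mathcal{I}_{\mathcal{T}_1}(U,r)$ by $U$ using (I3) in the defining inequality), so (4) specializes to (5); and every open set, for which $\mathcal{I}_{\mathcal{T}_1}(U,r)=U$, is $r$-fuzzy $\alpha$-open, so (5) specializes to the definition (1). This closes the cycle and proves all five conditions equivalent.
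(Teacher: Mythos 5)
Your proposal is correct and takes essentially the same route as the paper: the identical cycle $(1)\Rightarrow(2)\Rightarrow(3)\Rightarrow(4)\Rightarrow(5)\Rightarrow(1)$, resting on the same facts (interiors are open, closures are closed, $C_{\mathcal{T}_1}(U,r)=U$ for $r$-closed $U$, monotonicity and idempotency), with $(2)\Rightarrow(3)$, $(4)\Rightarrow(5)$, $(5)\Rightarrow(1)$ handled as the easy specializations the paper leaves as trivial. Indeed your $(3)\Rightarrow(4)$ is slightly more careful than the paper's, which applies (3) directly to a preopen set although (3) is stated only for sets $U$ with $\mathcal{T}_1(U)\geq r$; your device of applying (3) to the open set $W=\mathcal{I}_{\mathcal{T}_1}(C_{\mathcal{T}_1}(U,r),r)$ and squeezing with (C5) closes that small gap.
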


\begin{proof}
\begin{description}
\item[(1)$\Rightarrow$(2):]  Let $\mathcal{T}_1(U')\geq r$ for each $U\in L^X$ and $r\in L$, then $\mathcal{I}_{\mathcal{T}_1}(U,r)=\mathcal{I}_{\mathcal{T}_1}(C_{\mathcal{T}_1}(U,r),r)$. By using (1), \begin{eqnarray*}f_L^\rightarrow(\mathcal{I}_{\mathcal{T}_1}(U,r))&=&f_L^\rightarrow(\mathcal{I}_{\mathcal{T}_1}(C_{\mathcal{T}_1}(U,r),r))\leq sp\mathcal{I}_{\mathcal{T}_2}(f_L^\rightarrow(\mathcal{I}_{\mathcal{T}_1}(C_{\mathcal{T}_1}(U,r),r))\\
&=&sp\mathcal{I}_{\mathcal{T}_2}(f_L^\rightarrow(C_{\mathcal{T}_1}(U,r)),r).\end{eqnarray*}
for each $U\in L^X$ and $r\in L$ such that $\mathcal{T}_1(U')\geq r$.\medskip

\item[(2)$\Rightarrow$(3):] It is trivial and omitted.

\item[(3)$\Rightarrow$(4):] Let $U\in L^X$ be an $r$-fuzzy Preopen subset, then $U\leq \mathcal{I}_{\mathcal{T}_1}(C_{\mathcal{T}_1}(U,r),r)$. By using (3), we have \[f_L^\rightarrow(U)\leq f_L^\rightarrow(\mathcal{I}_{\mathcal{T}_1}(C_{\mathcal{T}_1}(U,r),r))\leq sp\mathcal{I}_{\mathcal{T}_2}(f_L^\rightarrow(C_{\mathcal{T}_1}(U,r)),r).\]

\item[(4)$\Rightarrow$(5):] It is trivial and omitted.

\item[(5)$\Rightarrow$(1):] It is trivial and omitted.
\end{description}
\end{proof}

\begin{thm}\label{thm3}If $f:(X,\mathcal{T}_1)\rightarrow (Y,\mathcal{T}_2)$ is $L$-fuzzy weakly Semi-Preopen and $L$-fuzzy strongly continuous, then $f_L^\rightarrow$ is $L$-fuzzy Semi-Preopen.
\end{thm}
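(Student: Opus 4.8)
The plan is to verify directly that $f_L^\rightarrow(U)$ coincides with its own Semi-Preinterior whenever $\mathcal{T}_1(U)\geq r$, since $U$ being $r$-fuzzy Semi-Preopen is equivalent to $sp\mathcal{I}_{\mathcal{T}_2}(f_L^\rightarrow(U),r)=f_L^\rightarrow(U)$. The two hypotheses are tailor-made to be chained: weak Semi-Preopenness pushes $f_L^\rightarrow(U)$ below a Semi-Preinterior of $f_L^\rightarrow(C_{\mathcal{T}_1}(U,r))$, while strong continuity collapses $f_L^\rightarrow(C_{\mathcal{T}_1}(U,r))$ back down to $f_L^\rightarrow(U)$.

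Concretely, I would fix $U\in L^X$ and $r\in L$ with $\mathcal{T}_1(U)\geq r$ and proceed as follows. First, applying the definition of $L$-fuzzy weakly Semi-Preopen gives
\[f_L^\rightarrow(U)\leq sp\mathcal{I}_{\mathcal{T}_2}(f_L^\rightarrow(C_{\mathcal{T}_1}(U,r)),r).\]
Next, $L$-fuzzy strong continuity yields $f_L^\rightarrow(C_{\mathcal{T}_1}(U,r))\leq f_L^\rightarrow(U)$. The one small ingredient I need is the monotonicity of the operator $sp\mathcal{I}_{\mathcal{T}_2}$ in its first argument, which is immediate from its defining formula as a supremum over $r$-fuzzy Semi-Preopen subsets dominated by the argument: enlarging the argument can only enlarge the indexing family. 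Feeding the strong-continuity inequality through this monotonicity gives
\[sp\mathcal{I}_{\mathcal{T}_2}(f_L^\rightarrow(C_{\mathcal{T}_1}(U,r)),r)\leq sp\mathcal{I}_{\mathcal{T}_2}(f_L^\rightarrow(U),r).\]

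Combining the two displayed chains produces $f_L^\rightarrow(U)\leq sp\mathcal{I}_{\mathcal{T}_2}(f_L^\rightarrow(U),r)$. On the other hand, the defining supremum always satisfies $sp\mathcal{I}_{\mathcal{T}_2}(f_L^\rightarrow(U),r)\leq f_L^\rightarrow(U)$, since every member of the family lies below $f_L^\rightarrow(U)$. Hence $sp\mathcal{I}_{\mathcal{T}_2}(f_L^\rightarrow(U),r)=f_L^\rightarrow(U)$, which by the definition of the Semi-Preinterior exhibits $f_L^\rightarrow(U)$ as $r$-fuzzy Semi-Preopen. As $U$ and $r$ with $\mathcal{T}_1(U)\geq r$ were arbitrary, $f_L^\rightarrow$ is $L$-fuzzy Semi-Preopen.

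I do not anticipate any serious obstacle here: the argument is essentially a two-inequality sandwich, and the only step that is not verbatim from the hypotheses is the monotonicity of $sp\mathcal{I}_{\mathcal{T}_2}$, which is a one-line consequence of its definition. The conceptual point worth stating explicitly is merely the equivalence between $r$-fuzzy Semi-Preopenness of a set and the fixed-point condition $sp\mathcal{I}_{\mathcal{T}_2}(\,\cdot\,,r)=(\,\cdot\,)$, so that the sandwich conclusion can be read off as the desired openness property.
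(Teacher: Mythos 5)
Your proposal is correct and follows essentially the same route as the paper's proof: apply weak Semi-Preopenness to get $f_L^\rightarrow(U)\leq sp\mathcal{I}_{\mathcal{T}_2}(f_L^\rightarrow(C_{\mathcal{T}_1}(U,r)),r)$, use strong continuity together with monotonicity of $sp\mathcal{I}_{\mathcal{T}_2}$ to replace $f_L^\rightarrow(C_{\mathcal{T}_1}(U,r))$ by $f_L^\rightarrow(U)$, and conclude via the fixed-point characterization of Semi-Preopenness. You merely spell out the monotonicity and sandwich steps that the paper leaves implicit, which is a harmless (indeed helpful) elaboration rather than a different argument.
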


\begin{proof}Let $U\in L^X$ and $r\in L$ such that $\mathcal{T}_1(U)\geq r$. Since $f_L^\rightarrow$ is $L$-fuzzy weakly Semi-Preopen \[f_L^\rightarrow(U)\leq sp\mathcal{I}_{\mathcal{T}_2}(f_L^\rightarrow(C_{\mathcal{T}_1}(U,r)),r).\] However, since $f_L^\rightarrow$ is $L$-fuzzy strongly continuous, $f_L^\rightarrow(U)\leq sp\mathcal{I}_{\mathcal{T}_2}(f_L^\rightarrow(U),r)$ and therefore $f_L^\rightarrow(U)$ is $r$-fuzzy Semi-Preopen subset.
\end{proof}

\begin{thm}If $f:(X,\mathcal{T}_1)\rightarrow (Y,\mathcal{T}_2)$ is $L$-fuzzy almost open function, then $f_L^\rightarrow$ is $L$-fuzzy weakly Semi-Preopen.
\end{thm}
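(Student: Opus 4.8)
The plan is to reduce the statement to the single observation that, for an $r$-fuzzy open $U$, the subset $W := \mathcal{I}_{\mathcal{T}_1}(C_{\mathcal{T}_1}(U,r),r)$ is $r$-fuzzy regular open. Once this is established, almost openness forces $f_L^\rightarrow(W)$ to be $r$-fuzzy open, and a short sandwiching argument using $U\leq W\leq C_{\mathcal{T}_1}(U,r)$ together with the definition of $sp\mathcal{I}_{\mathcal{T}_2}$ yields the weakly Semi-Preopen inequality. So I would fix $U\in L^X$ and $r\in L$ with $\mathcal{T}_1(U)\geq r$, set $W=\mathcal{I}_{\mathcal{T}_1}(C_{\mathcal{T}_1}(U,r),r)$, and proceed in three moves: verify $W$ is regular open, apply almost openness, then sandwich.

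For the regular-openness step I would show $\mathcal{I}_{\mathcal{T}_1}(C_{\mathcal{T}_1}(W,r),r)=W$ in two inequalities. For $\mathcal{I}_{\mathcal{T}_1}(C_{\mathcal{T}_1}(W,r),r)\leq W$: by (I3) we have $W\leq C_{\mathcal{T}_1}(U,r)$, so monotonicity of $C_{\mathcal{T}_1}$ (read off from (C3)) together with (C5) gives $C_{\mathcal{T}_1}(W,r)\leq C_{\mathcal{T}_1}(C_{\mathcal{T}_1}(U,r),r)=C_{\mathcal{T}_1}(U,r)$; applying the monotone $\mathcal{I}_{\mathcal{T}_1}$ (from (I4)) and recalling the definition of $W$ closes this direction. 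For $W\leq\mathcal{I}_{\mathcal{T}_1}(C_{\mathcal{T}_1}(W,r),r)$: by (I6) one has $\mathcal{I}_{\mathcal{T}_1}(W,r)=W$, and by (C2) $W\leq C_{\mathcal{T}_1}(W,r)$, so monotonicity of $\mathcal{I}_{\mathcal{T}_1}$ gives $W=\mathcal{I}_{\mathcal{T}_1}(W,r)\leq\mathcal{I}_{\mathcal{T}_1}(C_{\mathcal{T}_1}(W,r),r)$. Since $f$ is $L$-fuzzy almost open, regular openness of $W$ gives $\mathcal{T}_2(f_L^\rightarrow(W))\geq r$, so $f_L^\rightarrow(W)$ is $r$-fuzzy open; and every $r$-fuzzy open subset $A$ is $r$-fuzzy Semi-Preopen, because $A=\mathcal{I}_{\mathcal{T}_2}(A,r)\leq\mathcal{I}_{\mathcal{T}_2}(C_{\mathcal{T}_2}(A,r),r)\leq C_{\mathcal{T}_2}(\mathcal{I}_{\mathcal{T}_2}(C_{\mathcal{T}_2}(A,r),r),r)$ by (C2) and (I3).

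To finish, I would note $\mathcal{T}_1(U)\geq r$ forces $\mathcal{I}_{\mathcal{T}_1}(U,r)=U$, hence $U\leq W$ by monotonicity of $\mathcal{I}_{\mathcal{T}_1}$ applied to $U\leq C_{\mathcal{T}_1}(U,r)$, while $W\leq C_{\mathcal{T}_1}(U,r)$ is just (I3). Applying the monotone $f_L^\rightarrow$ gives $f_L^\rightarrow(U)\leq f_L^\rightarrow(W)\leq f_L^\rightarrow(C_{\mathcal{T}_1}(U,r))$. As $f_L^\rightarrow(W)$ is an $r$-fuzzy Semi-Preopen subset lying below $f_L^\rightarrow(C_{\mathcal{T}_1}(U,r))$, the definition of the Semi-Preinterior yields $f_L^\rightarrow(W)\leq sp\mathcal{I}_{\mathcal{T}_2}(f_L^\rightarrow(C_{\mathcal{T}_1}(U,r)),r)$, and combining with $f_L^\rightarrow(U)\leq f_L^\rightarrow(W)$ gives the required $f_L^\rightarrow(U)\leq sp\mathcal{I}_{\mathcal{T}_2}(f_L^\rightarrow(C_{\mathcal{T}_1}(U,r)),r)$. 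The main obstacle is the regular-openness verification: the monotonicity of $C_{\mathcal{T}_1}$ and $\mathcal{I}_{\mathcal{T}_1}$ is not listed as an axiom and must be deduced from (C3) and (I4), and the idempotency facts (C5) and (I6) have to be inserted at exactly the right places. Everything downstream is a routine sandwich plus the elementary "open $\Rightarrow$ Semi-Preopen" implication.
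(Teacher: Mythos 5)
Your proposal is correct and follows essentially the same route as the paper's own proof: both hinge on the observation that $W=\mathcal{I}_{\mathcal{T}_1}(C_{\mathcal{T}_1}(U,r),r)$ is $r$-fuzzy regular open, apply almost openness to make $f_L^\rightarrow(W)$ $r$-fuzzy open, and then sandwich $f_L^\rightarrow(U)\leq f_L^\rightarrow(W)\leq f_L^\rightarrow(C_{\mathcal{T}_1}(U,r))$ to land below the Semi-Preinterior. The only difference is one of detail: you verify the regular openness of $W$ and the ``open $\Rightarrow$ Semi-Preopen'' implication from the axioms (C1)--(C5) and (I1)--(I7), whereas the paper simply asserts these facts.
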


\begin{proof}Let $U\in L^{X}$ and $r\in L$ such that $\mathcal{T}_{1}(U)\geq r$. Since $f_L^\rightarrow$ is $L$-fuzzy almost open and $\mathcal{I}_{\mathcal{T}_{1}}(C_{\mathcal{T}_{1}}(U,r),r)$ is $r$-fuzzy regular open,
then \[\mathcal{I}_{\mathcal{T}_{2}}(f_L^\rightarrow(\mathcal{I}_{\mathcal{T}_{1}}(C_{\mathcal{T}_{2}}(U,r),r)),r)=f_L^\rightarrow(\mathcal{I}_{\mathcal{T}_{1}}(C_{\mathcal{T}_{2}}(U,r),r))\] and
hence\begin{eqnarray*} f_L^\rightarrow(\lambda)&\leq & f_L^\rightarrow(\mathcal{I}_{\mathcal{T}_{1}}(C_{\mathcal{T}_{1}}(U,r),r)\leq \mathcal{I}_{\mathcal{T}_{2}}(f_L^\rightarrow(C_{\mathcal{T}_{1}}(U,r)),r)\\
&\leq & sp\mathcal{I}_{\mathcal{T}_{2}}(f_L^\rightarrow(C_{\mathcal{T}_{1}}(U,r)),r).
\end{eqnarray*}This shows that $f_L^\rightarrow$ is $L$-fuzzy weakly Semi-Preopen.
\end{proof}

\begin{thm}\label{thm1}For the function $f:(X,\mathcal{T}_1)\rightarrow (Y,\mathcal{T}_2)$, the following conditions are equivalent:
\begin{enumerate}
\item[(1)] $f_L^\rightarrow$ is $L$-fuzzy weakly Semi-Preclosed;
\item[(2)] $spC_{\mathcal{T}_2}(f_L^\rightarrow(U),r)\leq f_L^\rightarrow (C_{\mathcal{T}_1}(U,r))$ for each $U\in L^X$ and $r\in L$ such that $\mathcal{T}_1(U)\geq r$;
\item[(3)] $spC_{\mathcal{T}_2}(f_L^\rightarrow(U),r))\leq f_L^\rightarrow(C_{\mathcal{T}_1}(U,r))$ for each an $r$-fuzzy regular open subset $U\in L^X$;
\item[(4)] For each $V\in L^Y$, $U\in L^X$ and $r\in L$ such that $\mathcal{T}_1(U)\geq r$ and $f_L^\leftarrow(V)\leq U$, there exists an $r$-fuzzy Semi-Preopen subset $W\in L^Y$ with $V\leq W$ and $f_L^\leftarrow (V)\leq C_{\mathcal{T}_1}(U,r)$;
\item[(5)] For each $x_\lambda\in J(L^Y)$, $U\in L^X$ and $r\in L$ such that $f_L^\leftarrow(x_\lambda)\leq U$, there exists an $r$-fuzzy Semi-Preopen subset $V\in L^Y$ with $x_\lambda\leq V$ and $f_L^\leftarrow(V)\leq C_{\mathcal{T}_1}(U,r)$.
\item[(6)] $spC_{\mathcal{T}_{2}}(f_L^\rightarrow(\mathcal{I}_{\mathcal{T}_1}(C_{\mathcal{T}_1}(U,r),r)),r)\leq f_L^\rightarrow(C_{\mathcal{T}_1}(U,r))$ for each $U\in L^X$ and $r\in L$;
\item[(7)] $spC_{\mathcal{T}_2}(f_L^\rightarrow(\mathcal{I}_{\mathcal{T}_1}(D_{\mathcal{T}_1}(U,r),r)),r)\leq f_L^\rightarrow (D_{\mathcal{T}_1}(U,r))$ for each $U\in L^X$ and $r\in L$;
\item[(8)] $spC_{\mathcal{T}_2}(f_L^\rightarrow(U),r)\leq f_L^\rightarrow(C_{\mathcal{T}_1}(U,r))$ for each $r$-fuzzy Semi-Preopen subset $U\in L^X$.
\end{enumerate}
\end{thm}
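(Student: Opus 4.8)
The plan is to prove all eight conditions equivalent by a cycle anchored at the definition~(1), exploiting repeatedly two structural facts: $C_{\mathcal{T}_1}(U,r)$ is always $r$-fuzzy closed, and $D_{\mathcal{T}_1}(U,r)$ is an infimum of $r$-fuzzy closed subsets (by its description as $\bigwedge\{V\mid U\le\mathcal{I}_{\mathcal{T}_1}(V,r),\,\mathcal{T}_1(V')\ge r\}$) and hence itself $r$-fuzzy closed. First I would dispose of the inclusions that come for free from the containments of the underlying classes of subsets: every $r$-fuzzy regular open subset is $r$-fuzzy open, and every $r$-fuzzy open subset is $r$-fuzzy Semi-Preopen (since $U\le C_{\mathcal{T}_1}(U,r)\le C_{\mathcal{T}_1}(\mathcal{I}_{\mathcal{T}_1}(C_{\mathcal{T}_1}(U,r),r),r)$ whenever $\mathcal{T}_1(U)\ge r$). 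Thus (8)$\Rightarrow$(2)$\Rightarrow$(3) hold by mere restriction of the quantifier, and I need only supply the substantive reverse passages.

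For the genuine forward work I would argue (1)$\Rightarrow$(6) and (1)$\Rightarrow$(7) directly: because $C_{\mathcal{T}_1}(U,r)$ and $D_{\mathcal{T}_1}(U,r)$ are $r$-fuzzy closed, substituting each of them for the closed set in the defining inequality of~(1) produces exactly~(6) and~(7). To return I would prove (6)$\Rightarrow$(2) and (7)$\Rightarrow$(2): for $r$-fuzzy open $V$ one has $V\le\mathcal{I}_{\mathcal{T}_1}(C_{\mathcal{T}_1}(V,r),r)$, which feeds~(6); and since $\mathcal{T}_1(V)\ge r$ forces $C_{\mathcal{T}_1}(V,r)=D_{\mathcal{T}_1}(V,r)$, the same computation with~(7) yields~(2). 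The loop closes via (2)$\Rightarrow$(1), obtained by applying~(2) to the open subset $\mathcal{I}_{\mathcal{T}_1}(W,r)$ for $r$-fuzzy closed $W$ and using $C_{\mathcal{T}_1}(\mathcal{I}_{\mathcal{T}_1}(W,r),r)\le W$. The equivalence (2)$\Leftrightarrow$(3) rests on the regular-open trick: for open $U$ the subset $\mathcal{I}_{\mathcal{T}_1}(C_{\mathcal{T}_1}(U,r),r)$ is $r$-fuzzy regular open, lies above $U$, and has the same closure $C_{\mathcal{T}_1}(U,r)$, so monotonicity of $f_L^\rightarrow$ and $spC_{\mathcal{T}_2}$ transfers~(3) back to~(2). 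Conditions (4) and (5) are the $L$-point / neighbourhood reformulations; I would first restore their intended form (each should assert the existence of an $r$-fuzzy Semi-Preopen $W\in L^Y$ with $f_L^\leftarrow(W)\le C_{\mathcal{T}_1}(U,r)$) and then prove (1)$\Leftrightarrow$(5) by testing the inequality of~(2) against co-prime points $x_\lambda\in J(L^Y)$ and unwinding the definition of $spC_{\mathcal{T}_2}$, with (4)$\Leftrightarrow$(5) the routine passage between an arbitrary $V$ and its constituent points.

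The step I expect to be the real obstacle is incorporating~(8). One direction, (8)$\Rightarrow$(1), is clean: for $r$-fuzzy closed $W$ the kernel $\mathcal{I}_{\mathcal{T}_1}(W,r)$ is $r$-fuzzy open, hence $r$-fuzzy Semi-Preopen, and applying~(8) to it together with $C_{\mathcal{T}_1}(\mathcal{I}_{\mathcal{T}_1}(W,r),r)\le W$ delivers~(1). The difficulty is any implication \emph{into}~(8), that is, upgrading the inequality from open (or regular open) subsets to the strictly larger Semi-Preopen class. The natural attempt fixes an $r$-fuzzy Semi-Preopen $U$, writes $U\le C_{\mathcal{T}_1}(G,r)$ with $G=\mathcal{I}_{\mathcal{T}_1}(C_{\mathcal{T}_1}(U,r),r)$ (which is regular open and satisfies $C_{\mathcal{T}_1}(G,r)=C_{\mathcal{T}_1}(U,r)$), and then invokes~(3) or~(6) to get $spC_{\mathcal{T}_2}(f_L^\rightarrow(G),r)\le f_L^\rightarrow(C_{\mathcal{T}_1}(U,r))$. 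This does \emph{not} close, because one only has $U\le C_{\mathcal{T}_1}(G,r)$ rather than $U\le G$, so monotonicity yields merely $spC_{\mathcal{T}_2}(f_L^\rightarrow(U),r)\le spC_{\mathcal{T}_2}(f_L^\rightarrow(C_{\mathcal{T}_1}(G,r)),r)$, and there is no mechanism to pull $spC_{\mathcal{T}_2}$ back across the closure unless $f_L^\rightarrow(C_{\mathcal{T}_1}(G,r))$ is already $r$-fuzzy Semi-Preclosed, which is precisely the stronger ``Semi-Preclosed map'' property. I therefore anticipate that~(8) is genuinely stronger than the other seven conditions: a small crisp model (the identity map between two topologies on a three-point set) can be arranged to satisfy~(1)--(7) while violating~(8). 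In the write-up I would either isolate an extra hypothesis forcing $f_L^\rightarrow$ to send closures of regular open subsets to Semi-Preclosed subsets, or drop~(8) from the list of equivalents and record it only as a sufficient condition implying~(1).
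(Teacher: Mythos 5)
Your proposal is essentially correct, and it is both more complete and more accurate than the paper's own proof; in particular, your suspicion about condition (8) is justified --- (8) is \emph{not} equivalent to (1)--(7), so you have located an error in the theorem rather than a gap in your argument. Where you overlap with the paper the routes coincide: the paper proves (1)$\Rightarrow$(2) by writing $U=\mathcal{I}_{\mathcal{T}_1}(U,r)$ and applying (1) to the closed set $C_{\mathcal{T}_1}(U,r)$; proves (2)$\Rightarrow$(1) by applying (2) to $\mathcal{I}_{\mathcal{T}_1}(U,r)$ and using $C_{\mathcal{T}_1}(\mathcal{I}_{\mathcal{T}_1}(U,r),r)\leq C_{\mathcal{T}_1}(U,r)=U$; proves (3)$\Rightarrow$(4) by the quasi-coincidence device you allude to, taking $W=\bigl(spC_{\mathcal{T}_2}(f_L^\rightarrow(C_{\mathcal{T}_1}(U,r)'),r)\bigr)'$ (the paper misprints the complement there, just as it misprints $f_L^\leftarrow(V)$ for $f_L^\leftarrow(W)$ in the statement of (4) --- the typo you repaired); and proves (5)$\Rightarrow$(1). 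Your treatment of (4) and (5) is sketchier than the paper's, but the mechanism is the same. The decisive difference is that the paper's proof says nothing at all about (6), (7) and (8), whereas you supply correct arguments for (6) and (7): by (O3), $C_{\mathcal{T}_1}(U,r)$ and $D_{\mathcal{T}_1}(U,r)$ are closed of degree $r$, so (1) specializes to each of them, and conversely $V\leq\mathcal{I}_{\mathcal{T}_1}(C_{\mathcal{T}_1}(V,r),r)$ together with $C_{\mathcal{T}_1}(V,r)=D_{\mathcal{T}_1}(V,r)$ for open $V$ (Theorem 2.5(4)) returns (2).

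Your predicted three-point counterexample for (8) does exist, so the correct repair is the one you propose (keep (8)$\Rightarrow$(1) as a sufficient condition and delete (8) from the equivalence). Take any complete DeMorgan algebra $L$, let $X=\{a,b,c\}$, let $\mathcal{T}_1$ equal $\top$ on the characteristic functions of $\emptyset,\{a\},\{b\},\{a,b\},X$ and $\bot$ otherwise, let $\mathcal{T}_2$ equal $\top$ on those of $\emptyset,\{c\},X$ and $\bot$ otherwise, and let $f$ be the identity. For $r\in L_{\bot}$, the $\mathcal{T}_1$-closed crisp sets are $X,\{b,c\},\{a,c\},\{c\},\emptyset$, with $\mathcal{T}_1$-interiors $X,\{b\},\{a\},\emptyset,\emptyset$, and each of $\chi_{\{a\}}$, $\chi_{\{b\}}$, $\underline{\top}$, $\underline{\bot}$ is $r$-fuzzy Semi-Preclosed in $(X,\mathcal{T}_2)$ (the first two have $\mathcal{T}_2$-interior $\underline{\bot}$), so (1) holds, and hence so do (2)--(7). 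However $U=\chi_{\{a,c\}}$ is $r$-fuzzy Semi-Preopen in $(X,\mathcal{T}_1)$, since $C_{\mathcal{T}_1}(\mathcal{I}_{\mathcal{T}_1}(C_{\mathcal{T}_1}(U,r),r),r)=C_{\mathcal{T}_1}(\chi_{\{a\}},r)=\chi_{\{a,c\}}\geq U$, while any $r$-fuzzy Semi-Preclosed $V\geq\chi_{\{a,c\}}$ in $(X,\mathcal{T}_2)$ must equal $\underline{\top}$: if $V(b)\neq\top$ then $\mathcal{I}_{\mathcal{T}_2}(V,r)=\chi_{\{c\}}$, so $\mathcal{I}_{\mathcal{T}_2}(C_{\mathcal{T}_2}(\mathcal{I}_{\mathcal{T}_2}(V,r),r),r)=\underline{\top}\not\leq V$. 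Hence $spC_{\mathcal{T}_2}(f_L^\rightarrow(U),r)=\underline{\top}\not\leq\chi_{\{a,c\}}=f_L^\rightarrow(C_{\mathcal{T}_1}(U,r))$, and (8) fails. This realizes exactly the obstruction you isolated: knowing $U\leq C_{\mathcal{T}_1}(G,r)$ for the regular open $G=\mathcal{I}_{\mathcal{T}_1}(C_{\mathcal{T}_1}(U,r),r)$ gives no way to pull $spC_{\mathcal{T}_2}$ across $f_L^\rightarrow$ of the closure unless that image is already Semi-Preclosed.
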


\begin{proof}\begin{description}
\item[(1)$\Rightarrow$(2):] Let $U\in L^X$ and $r\in L$ such that $\mathcal{T}_1(U)\geq r$. Then \begin{eqnarray*}spC_{\mathcal{T}_2}(f_L^\rightarrow(U),r)&=& spC_{\mathcal{T}_2}(f_L^\rightarrow(\mathcal{I}_{\mathcal{T}_1}(U,r)))\leq spC_{\mathcal{T}_2}(f_L^\rightarrow(\mathcal{I}_{\mathcal{T}_1}(C_{\mathcal{T}_1}(U,r),r)),r)\\
    &\leq &f_L^\rightarrow(C_{\mathcal{T}_1}(U,r)).\end{eqnarray*}
\item[(2)$\Rightarrow$(1):] Let $U\in L^X$ and $r\in L$ such that $\mathcal{T}(U')\geq r$. Then\begin{eqnarray*}spC_{\mathcal{T}_2}(f_L^\rightarrow(\mathcal{I}_{\mathcal{T}_1}(U,r)),r)&\leq & f_L^\rightarrow(C_{\mathcal{T}_1}(\mathcal{I}_{\mathcal{T}_1}(U,r),r))\leq f_L^\rightarrow (C_{\mathcal{T}_1}(U,r))\\
    &=& f_L^\rightarrow(U).\end{eqnarray*}

\item[(3)$\Rightarrow$(4):] Let $U\in L^X$, $V\in L^Y$ and $r\in L$ such that $\mathcal{T}_1(U)\geq r$ and $f_L^\leftarrow(V)\leq U$. Then $f_L^\leftarrow(V)\neg q C_{\mathcal{T}_1}(C_{\mathcal{T}_1}(U,r)',r)$. This implies to $V\neg q f_L^\rightarrow(C_{\mathcal{T}_1}(C_{\mathcal{T}_1}(U,r)',r))$. Since $C_{\mathcal{T}_1}(U,r)'$ is $r$-fuzzy regular open subset, $V\neg q spC_{\mathcal{T}_2}(f_L^\rightarrow(C_{\mathcal{T}_1}(U,r)'),r)$. Let $W=spC_{\mathcal{T}_2}(f_L^\rightarrow(C_{\mathcal{T}_1}(U,r)'),r)$. Then $W$ is $r$-fuzzy Semi-Preopen subset with $V\leq W$ and \[f_L^\leftarrow(W)= f_L^\leftarrow(spC_{\mathcal{T}_2}(C_{\mathcal{T}_1}(U,r)',r))'\leq f_L^\leftarrow(f_L^\rightarrow(C_{\mathcal{T}_1}(U,r)')')\leq C_{\mathcal{T}_1}(U,r).\]
\item[(5)$\Rightarrow$(1):] Let $V\in L^Y$ and $r\in L$ such that $\mathcal{T}_2(V')\geq r$ and $y_\beta\leq f_L^\rightarrow(V)'$. Since $f_L^\leftarrow(y_\beta)\leq V'$, there exists $r$-fuzzy Semi-Preopen subset $W\in L^Y$ such that $y_\beta\leq W$ and $f_L^\leftarrow(W)\leq C_{\mathcal{T}_1}(V',r)=\mathcal{I}_{\mathcal{T}_1}(V,r)'$. Therefore $W\neg q f_L^\rightarrow (\mathcal{I}_{\mathcal{T}_1}(V,r))$. Then $y_\beta\leq spC_{\mathcal{T}_2}(f_L^\rightarrow(\mathcal{I}_{\mathcal{T}_1}(V,r)),r)'$.
    \end{description}
\end{proof}

\begin{thm}
If the function $f:(X,\mathcal{T}_1)\rightarrow (Y,\mathcal{T}_2)$ is a bijective function. Then the following conditions are equivalent:\begin{enumerate}
\item[(1)] $f_L^\rightarrow$ is $L$-fuzzy weakly Semi-Preopen function;
\item[(2)] $spC_{\mathcal{T}_2}(f_L^\rightarrow(U),r)\leq f_L^\rightarrow(C_{\mathcal{T}_1}(U,r))$ for each $U\in L^X$ and $r\in L$ such that $\mathcal{T}_1(U)\geq r$;
\item[(3)] $spC_{\mathcal{T}_2}(f_L^\rightarrow(\mathcal{I}_{\mathcal{T}_1}(V,r)),r)\leq f_L^\rightarrow(V)$ for each $V\in L^X$ and $r\in L$ such that $\mathcal{T}_1(V')\geq r$.
\end{enumerate}
\end{thm}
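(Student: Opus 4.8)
The plan is to exploit bijectivity of $f$, which yields the pointwise identity $f_L^\rightarrow(W')=f_L^\rightarrow(W)'$ for every $W\in L^X$. Combined with the interior--closure duality (I1), $\mathcal{I}_{\mathcal{T}}(W',r)=C_{\mathcal{T}}(W,r)'$, and the analogous Semi-Pre duality $sp\mathcal{I}_{\mathcal{T}}(W',r)=spC_{\mathcal{T}}(W,r)'$, these identities drive the whole argument. I would first record that an $r$-fuzzy Semi-Preopen subset has an $r$-fuzzy Semi-Preclosed complement: complement the defining inequality of Definition (Semi-Preopen) and apply (I1) three times to convert $C_{\mathcal{T}}(\mathcal{I}_{\mathcal{T}}(C_{\mathcal{T}}(U,r),r),r)'$ into $\mathcal{I}_{\mathcal{T}}(C_{\mathcal{T}}(\mathcal{I}_{\mathcal{T}}(U',r),r),r)$, which is exactly the Semi-Preclosed condition for $U'$. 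Taking joins over complemented families then gives the Semi-Pre duality. With these four equalities in hand, I would prove (1)$\Leftrightarrow$(3) by complementation and (2)$\Leftrightarrow$(3) by direct substitution.

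For (1)$\Leftrightarrow$(3): starting from the defining inequality $f_L^\rightarrow(U)\le sp\mathcal{I}_{\mathcal{T}_2}(f_L^\rightarrow(C_{\mathcal{T}_1}(U,r)),r)$ valid whenever $\mathcal{T}_1(U)\ge r$, I would take $L$-complements of both sides, reversing the inequality. The left side becomes $f_L^\rightarrow(U')$ by bijectivity; on the right, the Semi-Pre duality turns the complemented Semi-Preinterior into $spC_{\mathcal{T}_2}$ of the complemented argument, bijectivity moves the complement inside $f_L^\rightarrow$, and (I1) rewrites $C_{\mathcal{T}_1}(U,r)'$ as $\mathcal{I}_{\mathcal{T}_1}(U',r)$. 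Setting $V=U'$, so that $\mathcal{T}_1(U)\ge r$ is exactly $\mathcal{T}_1(V')\ge r$, yields precisely condition (3). Since complementation is an involution and each duality is an equality, every step reverses, giving the converse for free.

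For (2)$\Leftrightarrow$(3), which does not even require bijectivity: to get (2)$\Rightarrow$(3) I would apply (2) to the open subset $U=\mathcal{I}_{\mathcal{T}_1}(V,r)$, which is legitimate since $\mathcal{T}_1(\mathcal{I}_{\mathcal{T}_1}(V,r))\ge r$, obtaining $spC_{\mathcal{T}_2}(f_L^\rightarrow(\mathcal{I}_{\mathcal{T}_1}(V,r)),r)\le f_L^\rightarrow(C_{\mathcal{T}_1}(\mathcal{I}_{\mathcal{T}_1}(V,r),r))$; then $\mathcal{I}_{\mathcal{T}_1}(V,r)\le V$ with monotonicity of $C_{\mathcal{T}_1}$ and $C_{\mathcal{T}_1}(V,r)=V$ (as $\mathcal{T}_1(V')\ge r$) collapses the right side to $f_L^\rightarrow(V)$. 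For (3)$\Rightarrow$(2) I would run the mirror image: apply (3) to the closed subset $V=C_{\mathcal{T}_1}(U,r)$, use that $U$ open forces $U=\mathcal{I}_{\mathcal{T}_1}(U,r)\le\mathcal{I}_{\mathcal{T}_1}(C_{\mathcal{T}_1}(U,r),r)$, and invoke monotonicity of $spC_{\mathcal{T}_2}\circ f_L^\rightarrow$. Chaining (1)$\Leftrightarrow$(3) with (2)$\Leftrightarrow$(3) closes all three equivalences.

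The main obstacle I anticipate is not any single inequality but the bookkeeping in the complementation step of (1)$\Leftrightarrow$(3): one must apply the bijection identity and the Semi-Pre duality in the correct order and verify that the side condition transforms as $\mathcal{T}_1(U)\ge r\leftrightarrow\mathcal{T}_1(V')\ge r$ under $V=U'$. It is worth isolating the Semi-Pre duality $sp\mathcal{I}_{\mathcal{T}}(W',r)=spC_{\mathcal{T}}(W,r)'$ as a preliminary lemma, since it is the only nonroutine ingredient and everything else reduces to monotonicity and the stated operator axioms.
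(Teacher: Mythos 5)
Your proposal is correct and follows essentially the same route as the paper: (1)$\Leftrightarrow$(3) via complementation, the bijectivity identity $f_L^\rightarrow(W')=f_L^\rightarrow(W)'$, (I1), and the Semi-Pre duality $sp\mathcal{I}_{\mathcal{T}}(W',r)=spC_{\mathcal{T}}(W,r)'$, then (3)$\Rightarrow$(2) by applying (3) to the closed subset $C_{\mathcal{T}_1}(U,r)$ together with $U\leq\mathcal{I}_{\mathcal{T}_1}(C_{\mathcal{T}_1}(U,r),r)$ and monotonicity. The only differences are that you spell out what the paper leaves implicit or dismisses as trivial: the duality lemma itself, the reverse implications (2)$\Rightarrow$(3) and (3)$\Rightarrow$(1), and the (true) observation that bijectivity is not needed for (2)$\Leftrightarrow$(3).
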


\begin{proof}\begin{description}\item[(1)$\Rightarrow$(3):]  Let $U\in L^X$ and $r\in L$ such that $\mathcal{T}_1(U')\geq r$. Then \[(f_L^\rightarrow(U))'=f_L^\rightarrow(U')\leq sp\mathcal{I}_{\mathcal{T}_2}(f_L^\rightarrow(C_{\mathcal{T}_1}(U',r)),r)\]  and so \[(f_L^\rightarrow(U))'\leq (spC_{\mathcal{T}_2}(f_L^\rightarrow(\mathcal{I}_{\mathcal{T}_1}(U,r)),r))'.\] Hence, \[spC_{\mathcal{T}_2}(f_L^\rightarrow(\mathcal{I}_{\mathcal{T}_1}(U,r)),r)\leq f_L^\rightarrow(U).\]

\item[(3)$\Rightarrow$(2):] Let $U\in L^X$ and $r\in L$ such that $\mathcal{T}_1(U)\geq r$. Since $\mathcal{T}_{1}(C_{\mathcal{T}_1}(U,r)')\geq r$  and $U\leq\mathcal{I}_{\mathcal{T}_1}(C_{\mathcal{T}_1}(U,r),r)$ and by using (3), we have \[spC_{\mathcal{T}_2}(f_L^\rightarrow(U),r)\leq spC_{\mathcal{T}_{2}}(f_L^\rightarrow(\mathcal{I}_{\mathcal{T}_1}(C_{\mathcal{T}_1}(U,r),r)),r)\leq f_L^\rightarrow(C_{\mathcal{T}_1}(U,r)).\]

\item[(2)$\Rightarrow$(3):] It is trivial and omitted.

\item[(3)$\Rightarrow$(1):] It is trivial and omitted.
\end{description}
\end{proof}

\begin{thm}For the function $f:(X,\mathcal{T}_1)\rightarrow (Y,\mathcal{T}_2)$, the following conditions are equivalent:
\begin{enumerate}
\item[(1)] $f_L^\rightarrow$ is $L$-fuzzy weakly Semi-Preclosed;
\item[(2)] $spC_{\mathcal{T}_2}(f_L^\rightarrow(\mathcal{I}_{\mathcal{T}_1}(U,r)),r)\leq f_L^\rightarrow(U)$ for $r$-fuzzy Semi-Preclosed subset $U\in L^X$;
\item[(3)] $spC_{\mathcal{T}_2}(f_L^\rightarrow(\mathcal{I}_{\mathcal{T}_1}(U,r)),r)\leq f_L^\rightarrow (U)$ for each $r$-fuzzy $\alpha$-closed subset $U\in L^X$.
\end{enumerate}
\end{thm}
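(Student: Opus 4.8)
The plan is to establish the cyclic chain (1)$\Rightarrow$(2)$\Rightarrow$(3)$\Rightarrow$(1), and to observe at the outset that two of these three links are purely formal. Every $r$-fuzzy closed subset (i.e.\ $\mathcal{T}_1(U')\geq r$) is $r$-fuzzy $\alpha$-closed, and every $r$-fuzzy $\alpha$-closed subset is $r$-fuzzy Semi-Preclosed; both inclusions follow directly from (C2), (I3) and the monotonicity of $C_{\mathcal{T}_1}$ and $\mathcal{I}_{\mathcal{T}_1}$. Since the inequality appearing in (1), (2) and (3) is literally the same formula $spC_{\mathcal{T}_2}(f_L^\rightarrow(\mathcal{I}_{\mathcal{T}_1}(U,r)),r)\leq f_L^\rightarrow(U)$, and only the class over which $U$ is quantified changes, asserting it over the largest class (Semi-Preclosed) forces it over the two smaller ones. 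Hence (2)$\Rightarrow$(3) and (3)$\Rightarrow$(1) are obtained at once, with no computation.

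The whole content is therefore concentrated in (1)$\Rightarrow$(2). Here I would first record that, for any $U\in L^X$, the interior $\mathcal{I}_{\mathcal{T}_1}(U,r)$ is $r$-fuzzy open (so $r$-fuzzy Semi-Preopen, again by (C2) and (I3)). This lets me apply Theorem~\ref{thm1}, in the form of its characterization of weak Semi-Preclosedness valid for Semi-Preopen subsets, namely $spC_{\mathcal{T}_2}(f_L^\rightarrow(W),r)\leq f_L^\rightarrow(C_{\mathcal{T}_1}(W,r))$, with $W=\mathcal{I}_{\mathcal{T}_1}(U,r)$. This gives $spC_{\mathcal{T}_2}(f_L^\rightarrow(\mathcal{I}_{\mathcal{T}_1}(U,r)),r)\leq f_L^\rightarrow(C_{\mathcal{T}_1}(\mathcal{I}_{\mathcal{T}_1}(U,r),r))$ and reduces the entire problem to bounding the right-hand side by $f_L^\rightarrow(U)$.

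The decisive step — and the one I expect to be the main obstacle — is thus to absorb $C_{\mathcal{T}_1}(\mathcal{I}_{\mathcal{T}_1}(U,r),r)$ into $U$ using that $U$ is $r$-fuzzy Semi-Preclosed. For an $r$-fuzzy $\alpha$-closed $U$ this is automatic: from $\mathcal{I}_{\mathcal{T}_1}(U,r)\leq\mathcal{I}_{\mathcal{T}_1}(C_{\mathcal{T}_1}(U,r),r)$ and monotonicity one gets $C_{\mathcal{T}_1}(\mathcal{I}_{\mathcal{T}_1}(U,r),r)\leq C_{\mathcal{T}_1}(\mathcal{I}_{\mathcal{T}_1}(C_{\mathcal{T}_1}(U,r),r),r)\leq U$, so the argument above already yields (1)$\Rightarrow$(3) unconditionally and hence, with the trivial inclusions, the equivalence (1)$\Leftrightarrow$(3). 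For a merely $r$-fuzzy Semi-Preclosed $U$ only the weaker relation $\mathcal{I}_{\mathcal{T}_1}(C_{\mathcal{T}_1}(\mathcal{I}_{\mathcal{T}_1}(U,r),r),r)\leq U$ is at hand; combined with $\mathcal{I}_{\mathcal{T}_1}(U,r)\leq\mathcal{I}_{\mathcal{T}_1}(C_{\mathcal{T}_1}(\mathcal{I}_{\mathcal{T}_1}(U,r),r),r)$ and (I6) this forces equality, so that $\mathcal{I}_{\mathcal{T}_1}(U,r)$ is in fact $r$-fuzzy regular open. The remaining passage from $f_L^\rightarrow(C_{\mathcal{T}_1}(\mathcal{I}_{\mathcal{T}_1}(U,r),r))$ down to $f_L^\rightarrow(U)$ is exactly where the Semi-Preclosed hypothesis must be exploited; this is the crux on which (1)$\Rightarrow$(2) turns, and the place where the idempotence (C5), (I6) of the two operators together with the definition of $spC_{\mathcal{T}_2}$ as the smallest Semi-Preclosed majorant should be brought to bear.
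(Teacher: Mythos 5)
Your reduction is set up correctly, and the parts of it you actually carry out are sound: the inclusions closed $\subseteq$ $\alpha$-closed $\subseteq$ Semi-Preclosed do make (2)$\Rightarrow$(3) and (3)$\Rightarrow$(1) immediate, and your argument for (1)$\Rightarrow$(3) --- apply the inequality $spC_{\mathcal{T}_2}(f_L^\rightarrow(W),r)\leq f_L^\rightarrow(C_{\mathcal{T}_1}(W,r))$ from Theorem \ref{thm1} to the open subset $W=\mathcal{I}_{\mathcal{T}_1}(U,r)$, then absorb via $C_{\mathcal{T}_1}(\mathcal{I}_{\mathcal{T}_1}(U,r),r)\leq C_{\mathcal{T}_1}(\mathcal{I}_{\mathcal{T}_1}(C_{\mathcal{T}_1}(U,r),r),r)\leq U$, which is exactly what $\alpha$-closedness provides --- is complete and correct. (For comparison: the paper's own proof of this theorem is the single word ``Straightforward,'' so there is no argument there to measure against.) But your proposal stops precisely where the theorem's content lies: (1)$\Rightarrow$(2) is never proved. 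Your final paragraph correctly isolates the missing step, correctly notes that for a merely Semi-Preclosed $U$ one only gets that $\mathcal{I}_{\mathcal{T}_1}(U,r)$ is $r$-fuzzy regular open, and then says the idempotence laws and the definition of $spC_{\mathcal{T}_2}$ ``should be brought to bear.'' That is a statement of intent, not a proof; as submitted, the equivalence with (2) is open.

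Moreover, this gap cannot be filled, because (1)$\Rightarrow$(2) is false, and your own reduction points at why: every $r$-fuzzy regular open subset $U$ is Semi-Preclosed (since $\mathcal{I}_{\mathcal{T}_1}(C_{\mathcal{T}_1}(\mathcal{I}_{\mathcal{T}_1}(U,r),r),r)=U$), so (2) applied to $F=U$ forces $f_L^\rightarrow(U)$ itself to be $r$-fuzzy Semi-Preclosed, whereas (1) only ever yields $spC_{\mathcal{T}_2}(f_L^\rightarrow(U),r)\leq f_L^\rightarrow(C_{\mathcal{T}_1}(U,r))$. Concretely (crisp subsets, any complete DeMorgan algebra $L$, any $r\in L_{\bot}$): let $X=\{a,b,c\}$ with $\mathcal{T}_1$ equal to $\top$ on $\underline{\bot},\chi_{\{a\}},\chi_{\{c\}},\chi_{\{a,c\}},\underline{\top}$ and $\bot$ otherwise; let $Y=\{x,y\}$ with $\mathcal{T}_2$ equal to $\top$ on $\underline{\bot},\chi_{\{x\}},\underline{\top}$ and $\bot$ otherwise; and let $f(a)=x$, $f(b)=f(c)=y$. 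Checking the five closed subsets of $X$ shows $f_L^\rightarrow$ is $L$-fuzzy weakly Semi-Preclosed: the essential cases are $U=\chi_{\{a,b\}}$, where $spC_{\mathcal{T}_2}(f_L^\rightarrow(\chi_{\{a\}}),r)=spC_{\mathcal{T}_2}(\chi_{\{x\}},r)=\underline{\top}=f_L^\rightarrow(\chi_{\{a,b\}})$, and $U=\chi_{\{b,c\}}$, where $f_L^\rightarrow(\chi_{\{c\}})=\chi_{\{y\}}$ is already Semi-Preclosed. Here $spC_{\mathcal{T}_2}(\chi_{\{x\}},r)=\underline{\top}$ because any Semi-Preclosed $V\geq\chi_{\{x\}}$ must satisfy $V\geq\mathcal{I}_{\mathcal{T}_2}(C_{\mathcal{T}_2}(\chi_{\{x\}},r),r)=\underline{\top}$. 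But $\chi_{\{a\}}$ is $r$-fuzzy regular open, hence Semi-Preclosed, and $spC_{\mathcal{T}_2}(f_L^\rightarrow(\mathcal{I}_{\mathcal{T}_1}(\chi_{\{a\}},r)),r)=spC_{\mathcal{T}_2}(\chi_{\{x\}},r)=\underline{\top}\not\leq\chi_{\{x\}}=f_L^\rightarrow(\chi_{\{a\}})$, so (2) fails. So the theorem as printed is itself wrong: what is true --- and what you proved --- is (2)$\Rightarrow$(3) and (1)$\Leftrightarrow$(3); the crux you flagged is not a hard step awaiting the right tool but a genuinely false implication hidden behind the paper's ``Straightforward.''
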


\begin{proof}Straightforward.
\end{proof}

\begin{thm}If  the function $f:(X,\mathcal{T}_1)\rightarrow (Y,\mathcal{T}_2)$ is $L$-fuzzy weakly Semi-Preopen and $L$-fuzzy strongly continuous, then $f_L^\rightarrow$ is $L$-fuzzy Semi-Preopen function.
\end{thm}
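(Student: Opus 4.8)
The plan is to fix an arbitrary $U\in L^X$ and $r\in L$ with $\mathcal{T}_1(U)\geq r$ and show that the image $f_L^\rightarrow(U)$ is an $r$-fuzzy Semi-Preopen subset of $L^Y$; since $U$ and $r$ are arbitrary subject only to $\mathcal{T}_1(U)\geq r$, this is exactly what it means for $f_L^\rightarrow$ to be $L$-fuzzy Semi-Preopen. The two hypotheses are unwound at this fixed pair. First, $L$-fuzzy weak Semi-Preopenness of $f_L^\rightarrow$ gives directly
\[f_L^\rightarrow(U)\leq sp\mathcal{I}_{\mathcal{T}_2}(f_L^\rightarrow(C_{\mathcal{T}_1}(U,r)),r).\]
Second, $L$-fuzzy strong continuity gives $f_L^\rightarrow(C_{\mathcal{T}_1}(U,r))\leq f_L^\rightarrow(U)$, which lets me replace the closure of $U$ inside the image by $U$ itself.

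Next I would feed the strong-continuity inequality into the weak-Semi-Preopenness estimate using monotonicity of the Semi-Preinterior operator $sp\mathcal{I}_{\mathcal{T}_2}(\cdot,r)$ in its first argument. This monotonicity is immediate from the defining supremum: if $A\leq B$ then every $r$-fuzzy Semi-Preopen $V\leq A$ also satisfies $V\leq B$, so the defining family for $sp\mathcal{I}_{\mathcal{T}_2}(A,r)$ is contained in that for $sp\mathcal{I}_{\mathcal{T}_2}(B,r)$. Applying this with $A=f_L^\rightarrow(C_{\mathcal{T}_1}(U,r))$ and $B=f_L^\rightarrow(U)$ collapses the previous estimate to
\[f_L^\rightarrow(U)\leq sp\mathcal{I}_{\mathcal{T}_2}(f_L^\rightarrow(U),r).\]

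Finally I would close the argument by observing that the reverse inequality $sp\mathcal{I}_{\mathcal{T}_2}(f_L^\rightarrow(U),r)\leq f_L^\rightarrow(U)$ always holds, being built into the supremum definition of $sp\mathcal{I}_{\mathcal{T}_2}$, so the two displays force the equality $f_L^\rightarrow(U)=sp\mathcal{I}_{\mathcal{T}_2}(f_L^\rightarrow(U),r)$. A fuzzy set that coincides with its own Semi-Preinterior is $r$-fuzzy Semi-Preopen: indeed the supremum of any family of $r$-fuzzy Semi-Preopen sets is again $r$-fuzzy Semi-Preopen, since each member satisfies $V_i\leq C_{\mathcal{T}_2}(\mathcal{I}_{\mathcal{T}_2}(C_{\mathcal{T}_2}(V_i,r),r),r)\leq C_{\mathcal{T}_2}(\mathcal{I}_{\mathcal{T}_2}(C_{\mathcal{T}_2}(\bigvee_i V_i,r),r),r)$ by monotonicity of $C_{\mathcal{T}_2}$ and $\mathcal{I}_{\mathcal{T}_2}$, and taking the supremum over $i$ yields the Semi-Preopenness of $\bigvee_i V_i$. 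Hence $f_L^\rightarrow(U)$ is $r$-fuzzy Semi-Preopen, as required. The only genuinely delicate point — and the step I expect to need the most care — is this last verification that $sp\mathcal{I}_{\mathcal{T}_2}$ always returns a Semi-Preopen set, since the whole conclusion rests on reading ``coincides with its Semi-Preinterior'' as ``Semi-Preopen''; everything else is a one-line substitution using the two hypotheses together with monotonicity. I note that this statement duplicates the earlier Theorem~\ref{thm3}, so the same short argument applies verbatim.
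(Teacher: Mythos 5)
Your proposal is correct and takes essentially the same route as the paper: weak Semi-Preopenness gives $f_L^\rightarrow(U)\leq sp\mathcal{I}_{\mathcal{T}_2}(f_L^\rightarrow(C_{\mathcal{T}_1}(U,r)),r)$, and strong continuity together with monotonicity of $sp\mathcal{I}_{\mathcal{T}_2}(\cdot,r)$ collapses this to $f_L^\rightarrow(U)\leq sp\mathcal{I}_{\mathcal{T}_2}(f_L^\rightarrow(U),r)$, forcing $f_L^\rightarrow(U)$ to be $r$-fuzzy Semi-Preopen (this is also exactly the argument of the paper's Theorem~\ref{thm3}, which, as you note, this statement duplicates). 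Your only addition is to spell out the step the paper leaves implicit, namely that a set coinciding with its Semi-Preinterior is Semi-Preopen because arbitrary suprema of $r$-fuzzy Semi-Preopen sets are $r$-fuzzy Semi-Preopen, which you verify correctly from monotonicity of $C_{\mathcal{T}_2}$ and $\mathcal{I}_{\mathcal{T}_2}$.
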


\begin{proof} Let $U\in L^X$ and $r\in L$ such that $\mathcal{T}_1(U)\geq r$.  Since $f_L^\rightarrow$ is $L$-fuzzy strongly contiinuous, $f_L^\rightarrow(C_{\mathcal{T}_1}(U,r))\leq f_L^\rightarrow(U)$ for each $U\in L^X$ and $r\in L$. But $f_L^\rightarrow$ is $L$-fuzzy weakly Semi-Preopen, then \[f_L^\rightarrow(U)\leq sp\mathcal{I}_{\mathcal{T}_2}(f_L^\rightarrow(C_{\mathcal{T}_1}(U,r)),r)\leq f_L^\rightarrow (U).\]This is implies to that $f_L^\rightarrow(U)$ is an $r$-fuzzy Semi-Preopen subset. Therefore, $f_L^\rightarrow$ is $L$-fuzzy Semi-Preopen function.
\end{proof}

\begin{defn} The function $f:(X,\mathcal{T}_1)\rightarrow (Y,\mathcal{T}_2)$ is said to be:
\begin{enumerate}
\item[(1)] an $L$-fuzzy contra-Semi-Preclosed function if $f_L^\rightarrow(U)$ is an $r$-fuzzy Semi-Preopen subset, for each $U\in L^X$ and $r\in L$ such that $\mathcal{T}_1(U')\geq r$.
\item[(2)] an $L$-fuzzy contra-Semi-Preopen if an $r$-fuzzy Semi-Preclosed subset, for each $U\in L^X$ and $r\in L$ such that $\mathcal{T}_1(U)\geq r$.
\end{enumerate}
\end{defn}

\begin{thm}If the function $f:(X,\mathcal{T}_1)\rightarrow (Y,\mathcal{T}_2)$ is $L$-fuzzy  contra-Semi-Preclosed (resp. $L$-fuzzy contra-Semi-Preopen), then it is an $L$-fuzzy weakly Semi-Preopen (resp. $L$-fuzzy weakly Semi-Preclosed).
\end{thm}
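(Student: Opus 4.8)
The plan is to prove the first implication (contra-Semi-Preclosed $\Rightarrow$ weakly Semi-Preopen) in full and to obtain the parenthetical dual by the mirror-image argument in which closure/Semi-Preinterior are interchanged with interior/Semi-Preclosure. The whole proof rests on two elementary facts that I would isolate first, after which the main step is a two-line computation.

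First I would observe that for every $U\in L^X$ and $r\in L$ the closure $C_{\mathcal{T}_1}(U,r)$ is itself $r$-fuzzy closed, i.e. $\mathcal{T}_1\big((C_{\mathcal{T}_1}(U,r))'\big)\geq r$. This is because $C_{\mathcal{T}_1}(U,r)$ is by definition an infimum of sets $V$ with $\mathcal{T}_1(V')\geq r$; complementation turns this infimum into a supremum, so axiom (O3) yields $\mathcal{T}_1\big((C_{\mathcal{T}_1}(U,r))'\big)\geq r$. Dually, $\mathcal{I}_{\mathcal{T}_1}(U,r)$ is $r$-fuzzy open, i.e. $\mathcal{T}_1\big(\mathcal{I}_{\mathcal{T}_1}(U,r)\big)\geq r$, directly from (O3) and the definition of the interior operator.

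Second I would record that an $r$-fuzzy Semi-Preopen subset is a fixed point of the Semi-Preinterior operator, and dually an $r$-fuzzy Semi-Preclosed subset is a fixed point of the Semi-Preclosure operator. Indeed, if $W$ is $r$-fuzzy Semi-Preopen then $W$ itself occurs in the family defining $sp\mathcal{I}_{\mathcal{T}_2}(W,r)$, so $sp\mathcal{I}_{\mathcal{T}_2}(W,r)\geq W$; together with the trivial inequality $sp\mathcal{I}_{\mathcal{T}_2}(W,r)\leq W$ this gives $sp\mathcal{I}_{\mathcal{T}_2}(W,r)=W$. The Semi-Preclosure statement is symmetric.

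With these in hand the main step is short. Fix $U\in L^X$ and $r\in L$ with $\mathcal{T}_1(U)\geq r$. By the first observation $C_{\mathcal{T}_1}(U,r)$ is $r$-fuzzy closed, so the contra-Semi-Preclosed hypothesis applies to it and $f_L^\rightarrow(C_{\mathcal{T}_1}(U,r))$ is $r$-fuzzy Semi-Preopen; by the second observation it therefore equals its own Semi-Preinterior. Since $U\leq C_{\mathcal{T}_1}(U,r)$ by (C2) and $f_L^\rightarrow$ is monotone, I obtain
\[f_L^\rightarrow(U)\leq f_L^\rightarrow(C_{\mathcal{T}_1}(U,r))=sp\mathcal{I}_{\mathcal{T}_2}(f_L^\rightarrow(C_{\mathcal{T}_1}(U,r)),r),\]
which is precisely the defining inequality for $L$-fuzzy weakly Semi-Preopen. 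For the dual, I would take $U$ with $\mathcal{T}_1(U')\geq r$, apply the contra-Semi-Preopen hypothesis to the open set $\mathcal{I}_{\mathcal{T}_1}(U,r)$ to make $f_L^\rightarrow(\mathcal{I}_{\mathcal{T}_1}(U,r))$ Semi-Preclosed (hence a fixed point of $spC_{\mathcal{T}_2}$), and conclude using $\mathcal{I}_{\mathcal{T}_1}(U,r)\leq U$ from (I3). I expect no genuine obstacle; the only point requiring any care is the first observation, namely that the closure and interior operators return sets of the correct degree of closedness/openness, and that is exactly where axiom (O3) is invoked.
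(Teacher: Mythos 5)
Your proposal is correct and takes essentially the same route as the paper: the paper's proof is exactly the chain $f_L^\rightarrow(U)\leq f_L^\rightarrow(C_{\mathcal{T}_1}(U,r))=sp\mathcal{I}_{\mathcal{T}_2}(f_L^\rightarrow(C_{\mathcal{T}_1}(U,r)),r)$ (and its dual), leaving implicit the two facts you isolate, namely that $C_{\mathcal{T}_1}(U,r)$ is $r$-fuzzy closed (resp.\ $\mathcal{I}_{\mathcal{T}_1}(U,r)$ is $r$-fuzzy open) and that $r$-fuzzy Semi-Preopen (resp.\ Semi-Preclosed) sets are fixed points of $sp\mathcal{I}_{\mathcal{T}_2}$ (resp.\ $spC_{\mathcal{T}_2}$). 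Your write-up merely makes these supporting observations explicit via (O3), (C2), and (I3), which is a faithful elaboration rather than a different argument.
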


\begin{proof} Let $U\in L^X$ and $r\in L$ such that $\mathcal{T}_1(U)\geq r$ (resp. $\mathcal{T}_1(U')\geq r$). Then, we have $ f_L^\rightarrow(U)\leq f_L^\rightarrow(C_{\mathcal{T}_1}(U,r))=sp\mathcal{I}_{\mathcal{T}_2}(f_L^\rightarrow(C_{\mathcal{T}_1}(U,r)),r)$ (resp. $spC_{\mathcal{T}_2}(f_L^\rightarrow(\mathcal{I}_{\mathcal{T}_1}(U,r)),r)= f_L^\rightarrow(\mathcal{I}_{\mathcal{T}_1}(U,r))\leq f_L^\rightarrow(U)$).
\end{proof}

\section{Applications}
In this section, we will provide some applications for this kind of functions in some other $L$-fuzzy topological concepts.  We know that there are many other applications of $L$-fuzzy weakly Semi-Preopen and $L$-fuzzy weakly Semi-Preclosed functions, but we confine ourselves to the application in separation and connectedness.
\subsection{Separation:}

\begin{defn} Let $(X,\mathcal{T})$ be an $L$-fuzzy topological space. An $L$-fuzzy
subsets $U$, $V\in L^{X}$ are $r$-fuzzy strongly separated if
there exist $H$, $N\in L^{X}$ such that $\mathcal{T}(H)\geq r$,
$\mathcal{T}(N)\geq r$ with $U\leq H$, $V\leq N$ and $C_{\mathcal{T}}(H,r)\neg{q} C_{\mathcal{T}}(N,r)$.
\end{defn}

\begin{defn} An $L$-fuzzy topological space
$(X,\mathcal{T})$ is called $r$-Semi pre $T_{2}$ if for each
$x_{\lambda_{1}}$, $x_{\lambda_{2}}$ with different supports there exist
$r$-fuzzy Semi-Preopen  sets $U$, $V\in L^{X}$ such that
$x_{\lambda_{1}}\leq U\leq x_{\lambda_{2}}'$, $x_{\lambda_{2}}\leq V\leq
x_{\lambda_{1}}'$ and $U\neg{q}V$.
\end{defn}

\begin{thm}If $f\colon (X,\mathcal{T}_{1})\rightarrow
(Y,\mathcal{T}_{2})$ is $L$-fuzzy weakly Semi-Preclosed surjective function and all fibers are $r$-fuzzy strongly separated, then $(Y,\mathcal{T}_{1})$ is
$r$-Semi Pre-$T_{2}$.
\end{thm}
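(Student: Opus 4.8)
The plan is to read off the separation axiom directly from the characterization of $L$-fuzzy weak Semi-Preclosedness in Theorem~\ref{thm1}, and in particular from condition~(5) there, which is tailor-made for manufacturing $r$-fuzzy Semi-Preopen sets in $Y$ out of open sets in $X$ that dominate a fibre. Accordingly, I would first fix two fuzzy points $y_{\beta_1},y_{\beta_2}\in J(L^Y)$ with distinct supports $y_1\neq y_2$ and pass to their fuzzy fibres $f_L^\leftarrow(y_{\beta_1})$ and $f_L^\leftarrow(y_{\beta_2})$ in $L^X$; reading the hypothesis ``all fibers are $r$-fuzzy strongly separated'' as a statement about precisely these fibres is the setup step.

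Since the two fibres are $r$-fuzzy strongly separated, there exist $H,N\in L^X$ with $\mathcal{T}_1(H)\geq r$, $\mathcal{T}_1(N)\geq r$, $f_L^\leftarrow(y_{\beta_1})\leq H$, $f_L^\leftarrow(y_{\beta_2})\leq N$, and $C_{\mathcal{T}_1}(H,r)\,\neg q\,C_{\mathcal{T}_1}(N,r)$. The two containments $f_L^\leftarrow(y_{\beta_i})\leq H,N$ are exactly the hypotheses required to invoke condition~(5). Applying Theorem~\ref{thm1}(5) to the pair $(y_{\beta_1},H)$ yields an $r$-fuzzy Semi-Preopen $V_1\in L^Y$ with $y_{\beta_1}\leq V_1$ and $f_L^\leftarrow(V_1)\leq C_{\mathcal{T}_1}(H,r)$; applying it to $(y_{\beta_2},N)$ yields an $r$-fuzzy Semi-Preopen $V_2\in L^Y$ with $y_{\beta_2}\leq V_2$ and $f_L^\leftarrow(V_2)\leq C_{\mathcal{T}_1}(N,r)$. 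Monotonicity of $\neg q$ under these containments then gives $f_L^\leftarrow(V_1)\,\neg q\,f_L^\leftarrow(V_2)$, i.e. $V_1(f(x))\leq (V_2(f(x)))'$ for every $x\in X$.

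The last step is to push $\neg q$ forward along the surjection: since $f$ is onto, every $y\in Y$ equals $f(x)$ for some $x$, so the pointwise inequality above yields $V_1(y)\leq (V_2(y))'$ for all $y$, that is $V_1\,\neg q\,V_2$. From $V_1\leq V_2'$ together with $y_{\beta_2}\leq V_2$ we get $V_1\leq V_2'\leq y_{\beta_2}'$, and symmetrically $V_2\leq V_1'\leq y_{\beta_1}'$; combined with $y_{\beta_1}\leq V_1$ and $y_{\beta_2}\leq V_2$, this verifies every clause of the definition of $r$-Semi Pre-$T_2$, whence $(Y,\mathcal{T}_2)$ is $r$-Semi Pre-$T_2$ (the ``$\mathcal{T}_1$'' in the statement being a typo for $\mathcal{T}_2$).

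I expect the main obstacle to lie in the bookkeeping around quasi-coincidence: one must justify cleanly that the disjointness $C_{\mathcal{T}_1}(H,r)\,\neg q\,C_{\mathcal{T}_1}(N,r)$ of the fibre-closures descends to $\neg q$ of the pullbacks $f_L^\leftarrow(V_1),f_L^\leftarrow(V_2)$ and then, crucially using surjectivity, rises to $\neg q$ of $V_1,V_2$ themselves. A secondary point to state explicitly is the interpretation of ``all fibers are $r$-fuzzy strongly separated'' as separation of the fuzzy fibres $f_L^\leftarrow(y_{\beta_1})$ and $f_L^\leftarrow(y_{\beta_2})$, since the entire argument hinges on feeding exactly those two fibres into the strong-separation hypothesis.
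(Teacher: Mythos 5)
Your proposal is correct and follows essentially the same route as the paper: feed the strongly separated fibre neighbourhoods into condition (5) of Theorem~\ref{thm1} to produce the $r$-fuzzy Semi-Preopen sets in $Y$, then use surjectivity to transfer $\neg q$ from the pullbacks to the sets themselves. In fact your write-up is tighter than the paper's, which contains typos (e.g.\ $f_L^{\leftarrow}(U)\leq C_{\mathcal{T}_{1}}(U,r)$ where $f_L^{\leftarrow}(H)$ is meant) and omits the explicit verification of the clauses $V_1\leq y_{\beta_2}'$ and $V_2\leq y_{\beta_1}'$ that you supply.
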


\begin{proof} Let $y_{\beta_{1}}$, $y_{\beta_{2}}\in
J(L^Y)$ and let $U$, $V\in L^{X}$, $r\in L$ such that
$\mathcal{T}_{1}(U)\geq r$, $\mathcal{T}_{1}(V)\geq r$,
$f_L^{\leftarrow}(y_{\beta_{1}})\leq U$ and $f_L^{\leftarrow}(y_{\beta_{2}})\leq V$
respectively with $C_{\mathcal{T}_{1}}(U,r)\neg{q}C_{\mathcal{T}_{1}}(V,r)$. By using Theorem \ref{thm1}, there are $r$-fuzzy Semi-Preopen  sets $H$, $N\in L^{Y}$ such that
$y_{\beta_{1}}\leq H$, $y_{\beta_{2}}\leq N$, $f_L^{\leftarrow}(U)\leq
C_{\mathcal{T}_{1}}(U,r)$ and $f_L^{\leftarrow}(N)\leq C_{\mathcal{T}_{1}}(V,r)$.
Therefore $H\neg{q}N$, because $C_{\mathcal{T}_{1}}(U,r)\neg{q}
C_{\mathcal{T}_{2}}(V,r)$ and $f_L^\rightarrow$ is surjective. Thus $(Y,\mathcal{T}_{2})$ is
$r$-fuzzy Semi-Pre-$T_{2}$.
\end{proof}

\subsection{Connectedness:}

\begin{defn} [\cite{spabbas}] Let $(X,\mathcal{T})$ be an $L$-fuzzy topological space and $r\in L$. The two $L$-fuzzy subsets $U$, $V\in L^{X}$ are
said to be $r$-fuzzy separated iff $U\neg{q} C_{\mathcal{T}}(V,r)$
and $V\neg{q} C_{\mathcal{T}}(U,r)$. An $L$-fuzzy subset which cannot be
expressed as the union of two $r$-fuzzy separated subsets is said to be
$r$-fuzzy connected.\end{defn}

\begin{defn} Let $(X,\mathcal{T})$ an $L$-fuzzy topological space. For $L$-fuzzy
subsets $U$, $V\in L^{X}$ such that $U\not =\underline{\bot}$ and $V\not
=\underline{\bot}$,  are said to be $r$-fuzzy Semi-Preseparated  if
$U\neg{q} spC_{\mathcal{T}}(V,r)$ and $V\neg{q}
spC_{\mathcal{T}}(U,r)$  or equivalently if there exist two $r$-fuzzy
Semi-Preopen subsets $H$, $N$ such that
$U\leq H$, $V\leq N$, $U\neg{q} N$ and
$V\neg{q} H$. An $L$-fuzzy topological space which can not be expressed as the union of
two  $r$-fuzzy Semi-Preseparated subsets is said to be $r$-fuzzy Semi-Preconnected space.
\end{defn}

\begin{thm}If $f:(X,\mathcal{T}_{1})\rightarrow (Y,\mathcal{T}_{2})$ is  injective $L$-fuzzy weakly Semi-Preopen  and $L$-fuzzy strongly continuous function of space $(X,\mathcal{T}_{1})$ onto an $r$-fuzzy Semi-Preconnected space $(Y,\mathcal{T}_{2})$, then $(X,\mathcal{T}_{1})$ is $r$-fuzzy connected.
\end{thm}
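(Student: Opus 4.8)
The plan is to argue by contradiction, transporting a disconnection of $(X,\mathcal{T}_1)$ across $f$ to a Semi-Preseparation of $(Y,\mathcal{T}_2)$. First I would record the preliminary facts that the hypotheses hand us. Since $f$ is $L$-fuzzy weakly Semi-Preopen and $L$-fuzzy strongly continuous, Theorem~\ref{thm3} gives that $f_L^\rightarrow$ is $L$-fuzzy Semi-Preopen, so $f_L^\rightarrow(G)$ is $r$-fuzzy Semi-Preopen whenever $\mathcal{T}_1(G)\geq r$. Next, because $f$ is injective the induced map $f_L^\rightarrow$ is an order-embedding (for $A,B\in L^X$, $f_L^\rightarrow(A)\leq f_L^\rightarrow(B)$ iff $A\leq B$) and it preserves non-quasi-coincidence: from $A\leq B'$ one gets $f_L^\rightarrow(A)\leq f_L^\rightarrow(B)'$. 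Because $f$ is surjective, $f_L^\rightarrow(\underline{\top})=\underline{\top}$, and $f_L^\rightarrow$ always preserves joins.

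Now suppose, for contradiction, that $(X,\mathcal{T}_1)$ is not $r$-fuzzy connected. Then $\underline{\top}=U\vee V$ for some $U,V\in L^X$ with $U,V\neq\underline{\bot}$ that are $r$-fuzzy separated, i.e. $U\neg q\,C_{\mathcal{T}_1}(V,r)$ and $V\neg q\,C_{\mathcal{T}_1}(U,r)$. Using (C2) and $U\leq C_{\mathcal{T}_1}(V,r)'$ I would first deduce $U\leq V'$, that is $U\neg q V$; injectivity then upgrades this to $f_L^\rightarrow(U)\neg q f_L^\rightarrow(V)$. Applying $f_L^\rightarrow$ to $\underline{\top}=U\vee V$ and invoking surjectivity gives $f_L^\rightarrow(U)\vee f_L^\rightarrow(V)=\underline{\top}$ in $L^Y$, and $f_L^\rightarrow(U),f_L^\rightarrow(V)\neq\underline{\bot}$ since $U,V\neq\underline{\bot}$.

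The point needing care, which I expect to be the main obstacle, is that $U$ and $V$ must be open so that $f_L^\rightarrow(U)$ and $f_L^\rightarrow(V)$ are $r$-fuzzy Semi-Preopen (a disconnection need not a priori consist of open sets). Here I would combine strong continuity with injectivity: for every $W\in L^X$ we have $f_L^\rightarrow(C_{\mathcal{T}_1}(W,r))\leq f_L^\rightarrow(W)$, and order-reflection forces $C_{\mathcal{T}_1}(W,r)\leq W$, whence $C_{\mathcal{T}_1}(W,r)=W$ by (C2); thus every $W$ is $r$-fuzzy closed and, applying this to $W'$, also $r$-fuzzy open. In particular $\mathcal{T}_1(U)\geq r$ and $\mathcal{T}_1(V)\geq r$, so $f_L^\rightarrow(U)$ and $f_L^\rightarrow(V)$ are $r$-fuzzy Semi-Preopen. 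Taking $H=f_L^\rightarrow(U)$ and $N=f_L^\rightarrow(V)$ in the equivalent (Semi-Preopen) form of Semi-Preseparatedness, all four requirements $f_L^\rightarrow(U)\leq H$, $f_L^\rightarrow(V)\leq N$, $f_L^\rightarrow(U)\neg q N$ and $f_L^\rightarrow(V)\neg q H$ hold, so $f_L^\rightarrow(U)$ and $f_L^\rightarrow(V)$ are $r$-fuzzy Semi-Preseparated. This exhibits $\underline{\top}$ in $L^Y$ as the union of two nonzero $r$-fuzzy Semi-Preseparated subsets, contradicting the $r$-fuzzy Semi-Preconnectedness of $(Y,\mathcal{T}_2)$, and therefore $(X,\mathcal{T}_1)$ is $r$-fuzzy connected.
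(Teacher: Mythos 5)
Your proof is correct, and it departs from the paper's precisely at the step you flagged as the main obstacle. The shared skeleton is identical: argue by contradiction from $\underline{\top}=U\vee V$ with $U,V$ nonzero and $r$-fuzzy separated, push everything through $f_L^\rightarrow$ (injectivity for $\neg q$, surjectivity for the join), use Theorem~\ref{thm3} to produce $r$-fuzzy Semi-Preopen images, and contradict the Semi-Preconnectedness of $(Y,\mathcal{T}_2)$. But the paper never needs $U$ and $V$ themselves to be open: from the separation it extracts open supersets $H$ and $N$ (concretely $H=C_{\mathcal{T}_1}(V,r)'$ and $N=C_{\mathcal{T}_1}(U,r)'$, which are $r$-open by (O3), contain $U$ resp.\ $V$ by the $\neg q$ hypotheses, and satisfy $U\neg q N$, $V\neg q H$ by (C2)), pushes those forward, and uses $f_L^\rightarrow(H)$, $f_L^\rightarrow(N)$ as the Semi-Preopen witnesses in the ``equivalently'' clause of the Semi-Preseparation definition. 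You instead prove the stronger structural fact that strong continuity plus injectivity forces $C_{\mathcal{T}_1}(W,r)=W$ for every $W\in L^X$, so every $L$-fuzzy subset of $X$ is $r$-clopen; then $U,V$ are themselves $r$-open and $f_L^\rightarrow(U)$, $f_L^\rightarrow(V)$ serve as their own witnesses. Both arguments are sound; note that your clopen claim silently uses (as the paper's construction of $H,N$ also does) that $C_{\mathcal{T}_1}(W,r)$ is itself $r$-fuzzy closed, which follows from (O3). The trade-off: the paper's route is more economical and more robust, since it invokes strong continuity only through Theorem~\ref{thm3} and would survive any weakening of hypotheses that still makes images of $r$-open sets Semi-Preopen; your route leans harder on the hypotheses but is more revealing, because the lemma you prove shows that injectivity plus strong continuity collapses $\mathcal{T}_1$ to the discrete topology at level $r$. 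In particular your argument exposes that the theorem is essentially vacuous whenever $X$ has at least two points: taking any crisp nonzero proper $A\in L^X$, the pair $f_L^\rightarrow(A)$, $f_L^\rightarrow(A')$ already Semi-Preseparates $Y$, so no such $f$ onto a Semi-Preconnected space can exist --- a degeneracy that the paper's proof leaves invisible.
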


\begin{proof}Let $(X,\mathcal{T}_{1})$ be not $r$-fuzzy connected. Then there exist $r$-fuzzy separated sets $U$, $V\in L^{X}$ such that $U\vee V=\underline{\top}$. Since $U$ and $V$ are $r$-fuzzy separated, there exists $H$, $N\in L^{X}$ such that $\mathcal{T}_{1}(H)\geq r$,  $\mathcal{T}_{1}(N)\geq r$ such that $U\leq H$, $V\leq N$, $U \neg{q} N$, $V\neg{q}H$. Hence we have $f_L^\rightarrow(U)\leq f_L^\rightarrow(H)$, $f_L^\rightarrow(V)\leq f_L^\rightarrow(N)$, $f_L^\rightarrow(U)\neg q f_L^\rightarrow(N)$ and $f_L^\rightarrow(V)\neg q f_L^\rightarrow(H)$.  Since $f_L^\rightarrow$ is $L$-fuzzy weakly Semi-Preopen and $L$-fuzzy strongly continuous function, from Theorem \ref{thm3} we have  $f_L^\rightarrow(H)$ and $f_L^\rightarrow(N)$ are $r$-fuzzy Semi-Preopen. Therefore, $f_L^\rightarrow(U)$ and $f_L^\rightarrow(V)$ are $r$-fuzzy Semi Preseparated and \[\underline{\top}=f_L^\rightarrow(\underline{\top})=f_L^\rightarrow(U\vee V)=f_L^\rightarrow(U)\vee f_L^\rightarrow(V)\] which is contradiction with $(Y,\mathcal{T}_{2})$ is $r$-fuzzy Semi Preconnected. Thus $(X,\mathcal{T}_{1})$ is $r$-fuzzy connected.
\end{proof}

\providecommand{\bysame}{\leavevmode\hbox to3em{\hrulefill}\thinspace}
\providecommand{\MR}{\relax\ifhmode\unskip\space\fi MR }
% \MRhref is called by the amsart/book/proc definition of \MR.
\providecommand{\MRhref}[2]{%
  \href{http://www.ams.org/mathscinet-getitem?mr=#1}{#2}
}
\providecommand{\href}[2]{#2}

\end{document}